\documentclass[reqno]{article}
\usepackage{pifont}
\usepackage{amsthm}
\usepackage{color}
\usepackage{amssymb}
\usepackage{mathrsfs}
\usepackage{amsmath, amsfonts, vmargin, enumerate}
\usepackage{graphics}
\usepackage{verbatim}
\usepackage{amsthm}
\usepackage{latexsym,bm}
\usepackage{euscript}
\usepackage[noblocks]{authblk}
\usepackage{geometry}
\usepackage{pgflibraryarrows}
\usepackage{pgflibrarysnakes}
\usepackage{subeqnarray}
\usepackage{cases}
\usepackage{xcolor}
\usepackage{subcaption}
\usepackage{caption}
\captionsetup[figure]{labelfont={bf},name={Fig.},labelsep=space}
\geometry{left=5cm,right=0.5cm,top=4.0cm,bottom=1.5cm}

\makeatletter

\renewcommand{\l}{\lambda}

\numberwithin{equation}{section}
\newtheorem{theorem}{Theorem}[section]

\newcommand\blfootnote[1]{%
  \begingroup
  \renewcommand\thefootnote{}\footnote{#1}%
  \addtocounter{footnote}{-1}%
  \endgroup
}

\newcommand{\Z}{\mathbb{Z}}

\renewcommand{\l}{\lambda}
\newcommand{\R}{\mathbb{R}}

\newcommand{\x}{\xi}
\newcommand{\am}{\operatorname{am}}
\newcommand{\cn}{\operatorname{cn}}
\newcommand{\sn}{\operatorname{sn}}
\newcommand{\dn}{\operatorname{dn}}

\newcommand{\cnap}[1]{\cn^{#1}(\l \x , m)}
\newcommand{\sna}{\sn(\l \x , m)}

\newcommand{\dna}{\dn(\l \x , m)}

\DeclareMathOperator{\sech}{sech}

\begin{document}

\title{Exact Jacobi elliptic solutions of the $abcd$-system}

\author{JAKE DANIELS$^1$ and NGHIEM V. NGUYEN$^1$ \\ 
}
\markboth{Jake Daniels and Nghiem V. Nguyen}
{Jacobian elliptic solutions}
\date{}
\maketitle

\begin{abstract}
    In this manuscript, consideration is given to the existence of periodic traveling-wave 
    solutions to the $abcd$-system.  This system was derived by Bona, Saut, and Chen to 
    describe small amplitude, long wavelength gravity waves on the surface of water. 
    These exact solutions are formulated in terms of the Jacobi elliptic function cnoidal. 
    The existence of explicit traveling-wave solutions is very useful in theoretical 
    investigations such as stability of solutions, as well as other numerical analysis 
    of the system.
\end{abstract}
\date{}
\maketitle
\blfootnote{{\it 2020 AMS Subject Classification:} 35A16, 35A24, 35B10.}
\blfootnote{{\it Key words:} Periodic solutions, Jacobi elliptic functions, 
    Cnoidal solutions, abcd system.}
\footnotetext[1]{Department of Mathematics and Statistics, 
    Utah State University, Logan, UT 84322-3900, USA}
\section{Introduction}
Let $\Omega_t$ be a domain in $\mathbb R^3\times \mathbb R$ whose boundary consists of two parts: the 
fixed surface located at $z=-h(x,y)$, and the free surface $z= \eta(x,y,t)$.  It is well-known 
that the three-dimensional capillary-gravity waves on an inviscid, irrotational, and 
incompressible fluid layer of uniform depth $h$ that is impermeable on the fixed surface are 
governed by the Euler equations
\begin{subequations}
\begin{numcases}{}
 \Delta \phi =0,\quad    \mbox{in}\  \Omega_t;\label{equ: poisson}\\
  \eta_t + \phi_x\eta_x + \phi_y\eta_y - \phi_z=0, \quad \text{at} \  z=\eta(x,y,t);\label{equ: boundary1}\\
  \phi_t+\frac{1}{2}|\nabla \phi|^2 + g z=0,\quad \text{at} \  z=\eta(x,y,t);\label{equ: boundary2}\\
  \phi_x h_x +\phi_y h_y +\phi_z=0,\quad \text{at} \ z=-h(x,y).\label{equ: boundary3}
\end{numcases}
\end{subequations}
Here, $g$ denotes the acceleration of gravity; $\phi(x,y,z,t)$ is the velocity potential 
function. The full Euler equations \eqref{equ: poisson}-\eqref{equ: boundary3} are often far 
more complex than  necessary for many applications, so several approximation
models have been derived in its place under certain restricted physical regimes.
One such model is the $abcd$-system, introduced by Bona \textit{et} \textit{al}.\ in \cite{BCS1,BCS2}, 
to describe small amplitude, long wavelength gravity waves on the surface of water 
\begin{equation}\label{abcd-system}
    \begin{cases}
        \eta_t + w_x + (w\eta)_x + aw_{xxx} - b\eta_{xxt} = 0 \,,\\
        w_t + \eta_x +ww_x + c\eta_{xxx} - dw_{xxt} = 0 \,,
    \end{cases}
\end{equation}
where $a, b, c,$ and $d$ are real constants and $\theta \in [0,1]$ that satisfy
\begin{equation}\label{abcd-parameters}
    \begin{split}
        a + b &= \frac{1}{2}(\theta^2 - \frac{1}{3}), \\
        c + d &= \frac{1}{2}(1 - \theta^2) \geq 0, \\
        a + b + c + d &= \frac{1}{3}.
    \end{split}
\end{equation}
Precisely, the regime for which the above system is derived to approximate the Euler equations 
is when the maximal deviation $\alpha$ of the free surface is small and a typical wavelength 
$\lambda$ is large as compared to the undisturbed water depth $h$, such that the Stokes number 
$S=\frac{\alpha \lambda^2}{h^3}$ is of order one. The functions $\eta(x,t)$ and $w(x,t)$ are 
real valued and $x, t \in \R$.  By choosing specific values for the parameters $a,b,c,$ and 
$d$, the system \eqref{abcd-system} includes a wide range of other systems that have been 
derived over the last few decades such as the classical Boussinesq system \cite{Bou1,Bou2,Bou3}, 
the Kaup system \cite{K}, the coupled Benjamin-Bona-Mahony system (BBM-system) \cite{BC}, 
the coupled Korteweg-de Vries system (KdV-system) \cite{BCS1, BCS2},
the Bona-Smith system \cite{BS}, and the integrable version of Boussinesq system \cite{Kri}. 
In particular, these specializations are:
\begin{itemize}
    \item Classical Boussinesq system
    \begin{equation*}
    \left\{
    \begin{matrix}
    \begin{split}
    & \eta_t +w_x +(w\eta)_x =0,\\
    & w_t +\eta_x +ww_x -\frac{1}{3}w_{xxt}=0;
    \end{split}
    \end{matrix}
    \right.
    \end{equation*}
    
    \item Kaup system
    \begin{equation*}
    \left\{
    \begin{matrix}
    \begin{split}
    & \eta_t +w_x +(w\eta)_x +\frac{1}{3} w_{xxx} =0,\\
    & w_t +\eta_x +ww_x =0;
    \end{split}
    \end{matrix}
    \right.
    \end{equation*}
    
    \item Bona-Smith system
    \begin{equation*}
    \left\{
    \begin{matrix}
    \begin{split}
    & \eta_t +w_x +(w\eta)_x -b\eta_{xxt} =0,\\
    & w_t +\eta_x +ww_x + c\eta_{xxx}- b w_{xxt}=0;
    \end{split}
    \end{matrix}
    \right.
    \end{equation*}
    
    \item Coupled BBM-system
    \begin{equation*}
    \left\{
    \begin{matrix}
    \begin{split}
    & \eta_t +w_x +(w\eta)_x -\frac{1}{6}\eta_{xxt} =0,\\
    & w_t +\eta_x +ww_x -\frac{1}{6}w_{xxt}=0;
    \end{split}
    \end{matrix}
    \right.
    \end{equation*}

    \item Coupled KdV-system
    \begin{equation*}
    \left\{
    \begin{matrix}
    \begin{split}
    & \eta_t +w_x +(w\eta)_x +\frac{1}{6} w_{xxx}=0,\\
    & w_t +\eta_x +ww_x +\frac{1}{6}\eta_{xxx}=0;
    \end{split}
    \end{matrix}
    \right.
    \end{equation*}
    
    \item Coupled KdV-BBM system
    \begin{equation*}
    \left\{
    \begin{matrix}
    \begin{split}
    & \eta_t +w_x +(w\eta)_x +\frac{1}{6}w_{xxx}=0,\\
    & w_t +\eta_x +ww_x -\frac{1}{6}w_{xxt}=0;
    \end{split}
    \end{matrix}
    \right.
    \end{equation*}
    
    \item Coupled BBM-KdV system
    \begin{equation*}
    \left\{
    \begin{matrix}
    \begin{split}
    & \eta_t +w_x +(w\eta)_x -\frac{1}{6}\eta_{xxt} =0,\\
    & w_t +\eta_x +ww_x +\frac{1}{6}\eta_{xxx}=0.
    \end{split}
    \end{matrix}
    \right.
    \end{equation*}
\end{itemize}
For a more detailed discussion on this, we refer our readers to the papers 
\cite{BCS1,BCS2}.

In this manuscript, attention is given to the existence of periodic traveling-wave 
solutions to the $abcd$-system.  A traveling-wave solution to the system 
\eqref{abcd-system} is a vector solution
$\big(\eta(x,t), w(x,t)\big)$ of the form 
\begin{equation}\label{traveling-wave}
    \eta(x,t) = \eta(x - \sigma t) \quad \text{and} \quad w(x,t) = 
    w(x - \sigma t),
\end{equation}
where $\sigma$ denotes the speed of the waves. Notice that when $\sigma>0$, one obtains 
a right-propagating solution, while for $\sigma<0$, one has a left-propagating solution.
This feature of bi-direction propagation of $(\eta, w)$ is a hallmark of the $abcd$-system 
in its derivation and is the main difference between system \eqref{abcd-system} and the one-way 
approximation models such as
\begin{equation*}
u_t + u_x + \frac{1}{p}(u^p)_x \pm L u_x =0, \ \ \ p>0,
\end{equation*}
(which is commonly referred to as the generalized KdV equation when $L = \partial_{xx}$, 
the generalized BBM equation when $L=-\partial_{xt}$, and simply the KdV and BBM equations, 
respectively, when $p=1$), and the cubic, nonlinear Schr\"odinger equation (NLS)
\begin{equation*}
i u_t + u_{xx} + |u|^2 u=0.
\end{equation*}
It would also be interesting to analyze the existence of periodic solutions when the two components 
propagate at different speeds, and/or when the two components travel in opposite directions.  
Such scenarios are not considered in the scope of this paper.

The most studied types of traveling-wave solutions are periodic traveling-wave solutions and solitary-wave solutions.  
Periodic traveling-wave solutions are self-explanatory, while solitary waves are smooth traveling-wave 
solutions that are symmetric around a \textit{single} maximum and rapidly decay to zero away from the 
maximum. Though less common, the term solitary wave is 
sometimes used to describe traveling-wave solutions that are symmetric around a single maximum but 
approach nonzero constants as $\xi = x - \sigma t \rightarrow \pm \infty$.
In this manuscript, we consider non-trivial solutions to be solutions where both components are non-constant, and 
semi-trivial solutions to be solutions where one component is a constant. 
The constant solution, $(f,g)$ for any $f,\,g \in \R$, is always a solution and will be referred to as the trivial solution.

The topic of existence of explicit periodic traveling-wave solutions for many single equations are readily 
available such as for the KdV equation \cite{DJ}, generalized KdV equation \cite{CP}, BBM equation \cite{AZ}, 
NLS equation \cite{Kam}, and Whitham equation \cite{EK}. In contrast, there are far less of these results for 
systems due to the complexity of the coupled equations involved. Leveling up the scalar results to systems 
is no minor task, requiring new insights and new approaches.  For the classical Boussinesq system, Krishnan 
showed that the periodic traveling-wave solutions exist and are given by Jacobi elliptic functions 
\cite{Kri}. Chen and Li \cite{CL} (see also \cite{WY}) have recently established the periodic traveling-wave 
solutions for the beta derivative of the Kaup system where the first temporal derivative $\partial_t$ is replaced 
by the fractional derivative $\partial^{\beta}_t$, with $\beta \in (0,1]$, given in terms of normal trigonometic 
functions as well as the Jacobi cnoidal functions. Four systems have been put forth recently  by Nguyen 
\textit{et al}.\ to describe the interaction of long and short waves in dispersive media \cite{DNS, LN, LN2}, and
periodic traveling-wave solutions given by the Jacobi elliptic function cnoidal have been explicitly calculated 
for all four systems \cite{BDN}.

Another important subject regarding traveling-wave solutions is the stability of these solutions.  The topic of stability of
periodic traveling-wave solutions such as cnoidal waves has attracted far less consideration 
than that of solitary wave solutions. Within this limited attention, explicitly known formulae for periodic traveling-wave 
solutions often play an instrumental role. Angula, Bona and Scialom \cite{ABS} proved that the periodic 
traveling-waves for the KdV equation are nonlinearly orbitally stable with respect to perturbations of the same period. 
Central to the argument in their proof is the fact that the cnoidal waves lie in the set of minimizers, a conclusion 
that would be hard to establish without having explicit formula for the cnoidal waves at hand. 
Deconinck \textit{et} \textit{al}.\ established that the KdV cnoidal waves are spectrally stable with respect 
to perturbations of arbitrary period \cite{BD}, and orbitally stable with respect to sub-harmonic perturbations \cite{DK}.  
Again, the explicit formulae for cnoidal waves play pivital roles in their numerical computations.

In this paper, periodic traveling-wave solutions to the system \eqref{abcd-system} of the form
\begin{equation}\label{eqn:eta-w}
    \eta(\x) = \sum_{r = 0}^n j_r \cnap{r} \quad \text{and} \quad 
    w(\x) = \sum_{r = 0}^n k_r \cnap{r}\,,
\end{equation}
are analyzed, where $j_r,\,k_r \in \mathbb R$, $\lambda > 0$, $ m \in (0,1]$, $\xi = x - \sigma t$, and $\cn$ denotes the Jacobi 
elliptic function cnoidal. For the readers' convenience, a brief introduction to Jacobi elliptic functions can be found in Section 2. 
If $m=0$, then \eqref{eqn:eta-w} reduces to a cosine series which
is not the aim of this paper and is excluded. The analysis of \eqref{eqn:eta-w} requires a case split for when $c \ne 0$ and $c=0$. 
For the first case, $c \ne 0$, it is shown that non-trivial periodic traveling-wave solutions only exist when $a^2 + b^2 \ne 0$, 
and in particular, $j_r = k_r = 0$ for $r \geq 3$.
When $a^2 + b^2 = 0$, it is established that only semi-trivial periodic traveling-wave solutions exist, and that
$j_r = 0$ for all $r \geq 1$ and $k_r = 0$ for $r \geq 3$.
For the second case, $c = 0$, it is shown that non-trivial periodic traveling-wave solutions only exist when $b^2 + d^2 \ne 0$,
and that $j_r = 0$ for $r = 1,\,3$ and $r \geq 5$ and $k_r = 0$ for $r = 1$ 
and $r \geq 3$.

The paper is organized as follows. In Section 2, some facts and identities of Jacobi elliptic functions are 
reviewed. In Section 3, the existence of periodic traveling-wave solutions to the system \eqref{abcd-system}
is established. The periodic traveling-wave solutions are then explicitly computed in Section 4.  
Finally, a discussion about the obtained results is given in Section 5, including remarks about how these 
cnoidal solutions limit to the solitary-wave solutions previously established in \cite{C1, C2}.

\section{Preliminaries}
For the readers' convinience, some notions of the Jacobi elliptic 
functions are briefly recalled here. Let
\begin{equation*}
    v=\int_0^{\phi} \frac{1}{\sqrt{1-m^2 \sin^2 t}}dt, \ \ \ \ \ 
    \mbox{for $0\leq m\leq 1$}.
\end{equation*}
Then $v =F(\phi, m)$, or equivalently, $\phi = F^{-1}(v,m)= \am(v,m)$ 
which is the Jacobi amplitude and $m$ is referred to as the Jacobi elliptic modulus. The two basic Jacobi elliptic 
functions $\cn(v,m)$ and $\sn(v,m)$ are defined as
\begin{equation*}
    \sn(v,m) = \sin(\phi) = \sin\big(F^{-1}(v,m)\big) \ \ \ \ \ 
    \mbox{and} \ \ \ \ \ \cn(v,m)= \cos(\phi) = \cos\big(F^{-1}(v,
    m)\big).
\end{equation*}
 These 
functions are generalizations of the trigonometric and hyperbolic 
functions which satisfy
\begin{equation*}
    \begin{matrix}
        \cn(v,0) =\cos(v), \ \ \ \ \ \ \ \sn(v,0) =\sin(v),\\
        \cn(v,1) = \operatorname{sech} (v), \ \ \ \ \ \sn(v,1) =\tanh 
            (v).
    \end{matrix}
\end{equation*}
We recall the following identities:
\begin{equation*}
    \left\{
    \begin{matrix}
    \begin{split}
    &\sn^2(\lambda \xi,m) =1-\cn^2 (\lambda \xi,m),\\
    &\dn^2(\lambda \xi,m) =1 - m^2 + m^2 \cn^2(\lambda \xi,m),\\
    &\frac{d}{d\xi}\cn(\lambda \xi,m)  = -\lambda \sn(\lambda \xi,m) \dn(\lambda \xi,m),\\
    &\frac{d}{d\xi} \sn(\lambda \xi,m) = \lambda \cn(\lambda \xi,m) \dn(\lambda \xi,m),\\
    &\frac{d}{d\xi} \dn(\lambda \xi,m) = -m^2 \lambda \cn(\lambda \xi,m) \sn(\lambda \xi,m).
    \end{split}
    \end{matrix}
    \right.
\end{equation*}
The following relations will be useful
\begin{equation}\label{derivatives}
    \left\{
    \begin{matrix}
    \begin{split}
    &\frac{d}{d\xi}\cn^r  = -r\lambda \cn^{r-1} \sn \dn,\\
    &\frac{d^2}{d\xi^2} \cn^r = -r \lambda^2\big[(r+1) m^2 \cn^{r+2} + r (1-2m^2) \cn^r + (r-1) (m^2-1) \cn^{r-2}\big],\\
    &\frac{d^3}{d\xi^3} \cn^r = r \lambda^3 \sn \dn \big[(r+1) (r+2) m^2 \cn^{r+1} + r^2 (1-2m^2) \cn^{r-1} + (r-1) (r-2) (m^2-1) \cn^{r-3}\big], \\
    \end{split}
    \end{matrix}
    \right.
    \end{equation}
where the argument $(\lambda \xi,m)$ has been dropped for clarity 
reason.

\section{Existence of Periodic Traveling-Wave Solutions}
Substituting the traveling-wave ansatz 
\eqref{traveling-wave} into the abcd-system \eqref{abcd-system} 
yields the following system of ordinary differential equations (ODE)
\begin{equation}\label{eta-w-system}\begin{cases}
    -\sigma \eta' + w' + (\eta w)' + aw''' + b \sigma \eta''' = 0 \,,\\
    -\sigma w' + \eta' + ww' + c\eta''' + d \sigma w''' = 0 \,,
\end{cases}\end{equation}
where the primes denote the derivatives with respect to the moving 
frame $\xi = x - \sigma t$. Replacing $\eta$ and $w$ in \eqref{eta-w-system} with the 
periodic traveling-wave ansatz \eqref{eqn:eta-w} and using cnoidal derivative identities
\eqref{derivatives}, the following generic form is obtained
\begin{equation}\label{eqn:h-pq}
    \left\{
    \begin{matrix}
    \begin{split}
        &- \lambda \sna \dna \sum_{q = 0}^{2n-1} h_{1, q} \cnap{q} = 0, \\
        &- \lambda \sna \dna \sum_{q = 0}^{2n-1} h_{2, q} \cnap{q} = 0,
    \end{split}
    \end{matrix}
    \right.
\end{equation}
where the subscript $q$ in $h_{1,q}$ and $h_{2,q}$ corresponds with the 
power of $\cn$ in their respective equations.  

The analysis is now split into two distinct cases. We will show that the 
the series \eqref{eqn:eta-w} will terminate after 
a certain number of terms. When exactly this termination will occur is
found to be dependent on the values of the parameters $a,\, b,\, c,$ and $d$.

\subsection{The case when $c \ne 0$}
Notice that as 
\eqref{eqn:h-pq} must hold true for all $(\lambda \xi,m)$, it must be 
the case that $h_{1,q}=0$ and $h_{2,q}=0$ for all $q$.
Using this, the following theorem is established.
(Recall that $j_r$ and $k_r$ are the coefficients of $\cn^r$ as given 
in \eqref{eqn:eta-w} for $r \in \Z_{\geq 0}$.)

\begin{theorem}\label{theorem1}
    Suppose $c \ne 0$, then the periodic traveling-wave ansatz \eqref{eqn:eta-w}
    will take on one of the following forms depending on the values of $a$ and $b$:
    \begin{enumerate}[(i)]
        \item If $a^2 + b^2 \ne 0$, then $j_r = k_r = 0$
            for all integers $r \geq 3$ and \eqref{eqn:eta-w} reduces to
            \begin{equation}\label{eqn:eta-w-1}
                \eta(\x) = j_0 + j_1 \cn(\l \x, m) + j_2 \cn^2 (\l \x, m) 
                \quad \text{and} \quad 
                w(\x) = k_0 + k_1 \cn(\l \x, m) + k_2 \cn^2 (\l \x, m)\,.
            \end{equation}
        \item If $a^2 + b^2 = 0$, then $j_r = 0$ for all integers 
            $r \geq 1$ and $k_r = 0$ for all integers $r \geq 3$. Thus, 
            \eqref{eqn:eta-w} reduces to
            \begin{equation}\label{eqn:eta-w-ST}
                \eta(\x) = j_0  \quad \text{and} \quad 
                w(\x) = k_0 + k_1 \cn (\l \x, m) + k_2 \cn^2 (\l \x, m)\,.
            \end{equation}
            Consequently, only semi-trivial periodic traveling-wave solutions exist 
            in this case.
    \end{enumerate}
\end{theorem}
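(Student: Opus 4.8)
The plan is to read off from \eqref{eqn:h-pq} that $h_{1,q}=0$ and $h_{2,q}=0$ for every $q$ (since $\sna\dna\not\equiv 0$), and then run a dominant-balance argument on the powers of $\cna$. Thinking of $\eta$ and $w$ as polynomials in $\cna$, the key accounting is this: the nonlinear terms $(\eta w)'$ and $ww'$ produce powers up to degree $\deg\eta+\deg w-1$ and $2\deg w-1$, while each dispersive term $\eta'''$, $w'''$ only reaches degree $\deg\eta+1$ or $\deg w+1$ (this follows directly from \eqref{derivatives}, whose leading coefficients never vanish for $m\in(0,1]$). The whole proof turns on the fact that $2N-1>N+1$ precisely when $N\ge 3$, which lets a lone top-order term force its own coefficient to vanish.

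First I would prove $\deg w\le 2$. Suppose $\deg w=p\ge 3$. In the second equation the only terms able to reach the top degree are $ww'$ (degree $2p-1$) and, since $c\ne 0$, $c\eta'''$ (degree $\deg\eta+1$); every remaining term has strictly smaller degree. Unless these two degrees coincide, the higher one stands alone, so its coefficient --- a nonzero multiple of $k_p^2$ or of $c\,j_{\deg\eta}$ --- would have to vanish, which is impossible. Hence $\deg\eta+1=2p-1$, i.e.\ $\deg\eta=2p-2$. But then in the first equation $(\eta w)'$ has degree $\deg\eta+\deg w-1=3p-3$, strictly above the dispersive contributions (which top out at $\deg\eta+1=2p-1<3p-3$), so $(\eta w)'$ is the unique leading term and its coefficient, a nonzero multiple of $j_{\deg\eta}k_p$, must vanish --- a contradiction. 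Therefore $\deg w\le 2$. Granting this, $\deg\eta\le 2$ is quick: if $\deg\eta=n\ge 3$, then $c\eta'''$ sits at degree $n+1\ge 4$ in the second equation while every term built from $w$ has degree at most $3$, so $c\eta'''$ stands alone with coefficient $-c\l^2 n(n+1)(n+2)m^2 j_n\ne 0$, again impossible. This yields $j_r=k_r=0$ for all $r\ge 3$, which is precisely conclusion (i).

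For conclusion (ii) I would use the extra structure available when $a=b=0$: the first equation in \eqref{eta-w-system} is then an exact $\x$-derivative and integrates once to $w(1+\eta)=C_1+\sigma\eta$ for some constant $C_1$. Arguing by contradiction, suppose $\deg\eta\ge 1$. If $\deg w\ge 1$ the left-hand side has degree $\deg\eta+\deg w>\deg\eta$, exceeding the right-hand side --- impossible --- so $w$ must be constant, and matching the non-constant part of $w(1+\eta)=C_1+\sigma\eta$ forces $w\equiv\sigma$. The second equation then collapses to $\eta'+c\eta'''=0$, i.e.\ $\eta+c\eta''=C_3$; but $\eta''$ raises the degree by two, so the leading term of $\eta+c\eta''$ is a nonzero multiple of $c\,m^2 j_{\deg\eta}\cnap{\deg\eta+2}$ and cannot reduce to a constant. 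Hence $\deg\eta=0$, giving $j_r=0$ for all $r\ge 1$ and the semi-trivial form \eqref{eqn:eta-w-ST}.

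The main obstacle is the $\deg w\le 2$ step, and within it the single resonant configuration $\deg\eta=2\deg w-2$, where $ww'$ and $c\eta'''$ occupy the same top degree in the second equation and neither can be eliminated on its own. Resolving it is exactly what forces one to bring in the first equation and exploit that $(\eta w)'$ then outranks every dispersive term; the remaining bookkeeping --- matching coefficients power by power --- is routine given the explicit derivative identities \eqref{derivatives}.
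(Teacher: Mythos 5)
Your proposal is correct, and it reaches both conclusions by a genuinely different route from the paper. For part (i) the paper runs an iterative order-reduction scheme: it starts from $\rho(\eta)=\rho(w)=n$, kills $k_n$, then repeatedly recomputes the six orders $\rho(\eta'),\dots,\rho(ww')$ through two separate phases of iteration, with an even/odd case split on $n$, until everything balances at degree $2$. You instead let $\deg\eta$ and $\deg w$ float independently and observe that in the second equation only $ww'$ and $c\eta'''$ can reach the top degree, so either one stands alone (immediate contradiction with the definition of degree) or the single resonance $\deg\eta=2\deg w-2$ occurs, which you then destroy in the first equation because $(\eta w)'$ sits at degree $3\deg w-3$, strictly above $2\deg w-1$ once $\deg w\geq 3$; after that, $\deg\eta\leq 2$ is a one-line consequence of $c\neq 0$. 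This collapses the paper's multi-stage bookkeeping into two dominant-balance steps and avoids the parity analysis entirely. For part (ii) the paper substitutes $a=b=0$ into the explicit coefficient list \eqref{coeffs 1} and works through a case tree on $j_2,k_2,j_1,k_1$; you integrate the first equation once to get $w(1+\eta)=C_1+\sigma\eta$, compare degrees to force $w$ constant, and then reduce the second equation to $\eta+c\eta''=\mathrm{const}$, whose leading coefficient $-c\,n(n+1)\lambda^2m^2j_n$ cannot vanish --- cleaner, and it explains \emph{why} $\eta$ must be constant rather than verifying it coefficient by coefficient. The only loose end is that in (ii) you still owe the reader the bound $k_r=0$ for $r\geq 3$ appearing in \eqref{eqn:eta-w-ST}; this is in fact covered because your $\deg w\leq 2$ argument never used $a$ or $b$, but it deserves the explicit one-sentence remark that the paper makes at the corresponding point.
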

\begin{proof}
    \textit{(i)} Let $\rho(\eta')$ denote the largest power of $\cn$ 
    in $\eta'$. Then,
    \begin{center}
        $ \rho(\eta')=\rho(w')=n-1 $, \quad $ \rho(\eta''')=\rho(w''')=n+1 $, \quad and \quad  
        $ \rho((\eta w)') = \rho(ww')=2n-1 $.
    \end{center}
    From this, if $n>2$ then $2n -1> n+1$, which implies that the coefficient of
    $\cn^{2n-1}$ in the second equation of \eqref{eqn:h-pq} will solely
    come from the $ww'$-term. Thus, $h_{2, 2n-1} =
    n k_n^2$. Setting this coefficient equal to zero 
    gives $k_{n}=0$. Recalculating the above orders yields
    \begin{center}
        $ \rho(\eta')= n-1$, \quad $\rho(w')=n-2$, \quad $ \rho(\eta''')=n+1$, \quad 
        $\rho(w''')=n $, \quad  
        $ \rho((\eta w)') = 2n-2$, \quad and \quad $\rho(ww')=2n-3 $.
    \end{center}
    After $i$ iterations, the orders become
    \begin{center}
        $ \rho(\eta')= n-1$, \quad $\rho(w')=n-2-i$, \quad $ \rho(\eta''')=n+1$, \quad 
        $\rho(w''')=n-i $, \quad  
        $ \rho((\eta w)') = 2n-2-i$, \quad and \quad $\rho(ww')=2n-3-2i $.
    \end{center}
    This argument can be repeated until $\rho(ww') \leq \rho(\eta''')$.  
    Now, consider the two possibilities for $n$.
    \begin{itemize}
        \item If n is even: let $z \in {\Z}_{\geq 1}$ such that 
        $n=2z$. Then $\rho(ww')=2n-3-2i=4z-3-2i$ and $\rho(\eta''')=n+1=2z+1$. 
        Since $\rho(ww')$ and $\rho(\eta''')$ are both odd, there exists an 
        integer $i$ such that $\rho(ww') = \rho(\eta''')$. Solving for $i$ gives
        \begin{align*}
            4z-3-2i &= 2z+1.
        \end{align*}
        Thus, $\rho(ww')=\rho(\eta''')$ when $i=z-2$. Recalculating the 
        orders reveals
        \begin{center}
            $ \rho(\eta')= 2z-1$, \quad $\rho(w')=z$, \quad $ \rho(\eta''')=2z+1$, \quad 
            $\rho(w''')=z+2 $, \quad  
            $ \rho((\eta w)') = 3z$, \quad and \quad $\rho(ww')=2z+1 $.
        \end{center}
        \item If n is odd: let $z \in {\Z}_{\geq 1}$ such that 
        $n=2z+1$. Then $\rho(ww')=2n-3-2i=4z-1-2i$ and $\rho(\eta''')=n+1=2z+2$. 
        Since $\rho(ww')$ is odd and $\rho(\eta''')$ is even, there 
        will not be an integer $i$ such that $\rho(ww') = \rho(\eta''')$. So this
        argument will break when $\rho(ww') < \rho(\eta''')$, which will happen when
        \begin{align*}
            4z-1-2i &< 2z+2,
        \end{align*}
        which gives $i > z - \frac{3}{2}$. Thus, $i = z-1$ will be the first iteration 
        such that the above inequality is achieved. From here, the orders become
        \begin{center}
            $ \rho(\eta')= 2z$, \quad $\rho(w')=z$, \quad $ \rho(\eta''')=2z+2$, \quad 
            $\rho(w''')=z+2 $, \quad  
            $ \rho((\eta w)') = 3z+1$, \quad and \quad $\rho(ww')=2z+1 $.
        \end{center}
        However, $\rho(\eta''')>\rho(ww')$, so from the second 
        equation, the coefficient of $\cn^{2z+2}$ will solely come from 
        the $\eta'''$-term, which implies $h_{2, 2z+2} = -c \l^2 m^2 (2z+1)(2z+2)(2z+3)
        j_{2z+1}$. Since $c, \l, m$ are not zero, it must be the case that $j_{2z+1}=0$.
        Recalculating the orders reveals
        \begin{center}
            $ \rho(\eta')= 2z-1$, \quad $\rho(w')=z$, \quad $ \rho(\eta''')=2z+1$, \quad 
            $\rho(w''')=z+2 $, \quad  
            $ \rho((\eta w)') = 3z$, \quad and \quad $\rho(ww')=2z+1 $.
        \end{center}
    \end{itemize}
    So, regardless of whether $n$ is even or odd, the same reduction of orders is achieved.
    The orders of the second equation are now balanced, but the orders in the first 
    equation are not. Specifically, $\rho((\eta w)')$ is greater than any 
    other term in the first equation. This means that the 
    coefficient of $\cn^{3z}$ in the first equation of \eqref{eqn:h-pq} 
    will solely come from the $(\eta w)'$-term, which implies $h_{1, 3z} = (3z+1)
    j_{2z}k_{z+1}$. Demanding $h_{1, 3z}=0$ yields that either $j_{2z} = 0$
    or $k_{z+1} = 0$.
    \begin{itemize}
        \item If $k_{z+1} = 0$: recalculating the orders
        \begin{center}
            $ \rho(\eta')= 2z-1$, \quad $\rho(w')=z-1$, \quad $ \rho(\eta''')=2z+1$, \quad 
            $\rho(w''')=z+1 $, \quad  
            $ \rho((\eta w)') = 3z-1$, \quad and \quad $\rho(ww')=2z-1 $.
        \end{center}
        Like in the odd case above, $\rho(\eta''')>\rho(ww')$, so the coefficients of 
        $\cn^{2z+1}$ and $\cn^{2z}$ are solely given by the $\eta'''$-term. 
        This implies that $h_{2, 2z+1} = -c\l^2 m^2 (2z)(2z+1)(2z+2)j_{2z}$.
        Since $c, \l, m$ are not zero, it must be the case that $j_{2z} = 0$.
        Similarly, since $h_{2, 2z} = -c\l^2m^2(2z-1)(2z)(2z+1)j_{2z-1}$, the same 
        logic yields $j_{2z-1} = 0$. Thus,
        \begin{center}
            $ \rho(\eta')= 2z-3$, \quad $\rho(w')=z-1$, \quad $ \rho(\eta''')=2z-1$, \quad 
            $\rho(w''')=z+1 $, \quad  
            $ \rho((\eta w)') = 3z-3$, \quad and \quad $\rho(ww')=2z-1 $.
        \end{center} 
        \item If $j_{2z}=0$: recalculating the orders
        \begin{center}
            $ \rho(\eta')= 2z-2$, \quad $\rho(w')=z$, \quad $ \rho(\eta''')=2z$, \quad 
            $\rho(w''')=z+2 $, \quad  
            $ \rho((\eta w)') = 3z-1$, \quad and \quad $\rho(ww')=2z+1 $.
        \end{center}
        Now, $\rho(ww')>\rho(\eta''')$, so it is similarly deduced that  
        $h_{2, 2z+1} = (z+1) k_{z+1}^2$ which implies $k_{z+1} = 0$. 
        Therefore,
        \begin{center}
            $ \rho(\eta')= 2z-2$, \quad $\rho(w')=z-1$, \quad $ \rho(\eta''')=2z$, \quad 
            $\rho(w''')=z+1 $, \quad  
            $ \rho((\eta w)') = 3z-2$, \quad and \quad $\rho(ww')=2z-1 $.
        \end{center}
        Again $\rho(\eta''')>\rho(ww'),$ so $j_{2z-1} = 0$. 
        Consequently,
        \begin{center}
            $ \rho(\eta')= 2z-3$, \quad $\rho(w')=z-1$, \quad $ \rho(\eta''')=2z-1$, \quad 
            $\rho(w''')=z+1 $, \quad  
            $ \rho((\eta w)') = 3z-3$, \quad and \quad $\rho(ww')=2z-1 $.
        \end{center}
    \end{itemize}
    So, regardless of whether 
    $j_{2z} = 0$ or $k_{z+1} = 0$, the same order reduction is achieved. 
    After $i$ iterations of this argument, the orders become
    \begin{center}
        $ \rho(\eta')= 2z-3-2i$, \quad $\rho(w')=z-1-i$, \quad 
        $ \rho(\eta''')=2z-1-2i$, \quad $\rho(w''')=z+1-i $, \quad  
        $ \rho((\eta w)') = 3z-3-3i$, \quad and \quad $\rho(ww')=2z-1-2i $.
    \end{center}
    By balancing the orders, there are three instances 
    in which this argument breaks. From the first equation, the argument
    breaks when $\rho((\eta w)') \leq \rho(w''')$ or $\rho((\eta w)') \leq \rho(\eta''')$. 
    From the second equation, the argument 
    will break when $\rho(ww') = \rho(\eta''') \leq \rho(w''')$.
    \begin{center}
        $\rho((\eta w)') \leq \rho(w'''): \quad 
            3z-3-3i \leq z+1-i \quad \iff \quad z-2 \leq i$; \\
        $\rho((\eta w)') \leq \rho(\eta'''):
            \quad 3z-3-3i \leq 2z-1-2i \quad \iff \quad  z-2 \leq i$; \\
        $\rho(ww') = \rho(\eta''') \leq \rho(w'''):
            \quad 2z-1-2i \leq z+1-i \quad \iff \quad z-2 \leq i$. \\
    \end{center}
    So this argument will break after $(z-2)$ iterations for 
    all three cases.
    Plugging $i=z-2$ and recalculating the 
    orders a final time yields
    \begin{equation}\label{eqn:orders-1}
        \rho(\eta')= 1, \quad \rho(w')=1, \quad 
        \rho(\eta''')=3, \quad \rho(w''')=3 , \quad  
        \rho((\eta w)') = 3, \quad \text{and} \quad \rho(ww')=3.
    \end{equation}
    Since $c \ne 0$ and $a^2 + b^2 \ne 0$, the 
    highest order terms in both equations are balanced. Furthermore,
    since $\rho(\eta') = \rho(w') = 1$, the identities in \eqref{derivatives}
    imply that $\rho(\eta) = \rho(w) = 2$.
    Therefore, if $c \ne 0$ and $a^2 + b^2 \ne 0$,
    then $j_r = k_r = 0$, for all $r \geq 3$. \\[10 pt]
    \textit{(ii)}
    If $a^2 + b^2 =0$, then $a = b = 0$. With this, the
    first equation in \eqref{eta-w-system} is reduced to
    \begin{equation*}
        -\sigma \eta' + w' + (\eta w)' = 0.
    \end{equation*}
    The orders in \eqref{eqn:orders-1} are now unbalanced since $\rho((\eta w)') > 
    \rho(\eta') = \rho(w')$. So, $h_{1, 3} = 4j_2k_2$ which implies either 
    $j_2 = 0$ or $k_2 = 0$.
    \begin{itemize}
        \item If $j_2 = 0$: then it follows that $h_{1,2} = 3j_1k_2$, so either 
            $j_1 = 0$ or $k_2 = 0$. 
        \begin{itemize}
            \item If $j_1 = 0$, then the desired result is obtained. 
            \item If $k_2 = 0$, then the orders in the second equation of \eqref{eta-w-system} 
            are unbalanced, and $h_{2, 1} = k_1^2$ which yields $k_1 = 0$. Now $\rho(w w') < 
            \rho(\eta''')$ so from the same argument used in the proof of \textit{(i)}, it follows that
            $j_1 = 0$.
        \end{itemize}
        \item If $k_2 = 0$: then $h_{1,2} = 3j_2k_1$, so either $j_2 = 0$ or $k_1 = 0$.
        \begin{itemize}
            \item If $j_2 = 0$, then we arrive at the same case as above, and it 
                follows that $j_1 = k_1 = 0$.
            \item If $k_1 = 0$, then $w = k_0$ which implies $w w' = 0$. Now, the 
                second equation in \eqref{eta-w-system} becomes 
                \begin{equation*}
                    \eta' + c \eta''' = 0,
                \end{equation*}
                but $\rho(\eta''') > \rho(\eta')$ and it quickly follows that 
                $j_2 = j_1 = 0$.
        \end{itemize}
    \end{itemize}

    Therefore, when $c \ne 0$ and $a^2 + b^2 = 0$, $j_r = 0$ for all integers $r \geq 1$.
    Moreover, the same result proved in \textit{(i)} holds for $w$, that is, 
    $k_r = 0$ for all integers $r \geq 3$. Thus, $\eta$ is constant so there are only 
    semi-trivial periodic traveling-wave solutions in this case. \\
\end{proof}

From the above analysis, we see that non-trivial periodic traveling-wave solutions to 
\eqref{eta-w-system} may only occur when $c \ne 0$ and $a^2 + b^2 \ne 0$, so 
we will focus on this case. 
Under these conditions, $j_r = k_r = 0$ for
all integers $r \geq 3$, so \eqref{eqn:h-pq} reduces to
\begin{equation*}
    \left\{
    \begin{matrix}
    \begin{split}
        &- \lambda \sna \dna \sum_{q = 0}^{3} h_{1, q} \cnap{q} = 0, \\
        &- \lambda \sna \dna \sum_{q = 0}^{3} h_{2, q} \cnap{q} = 0.
    \end{split}
    \end{matrix}
    \right.
\end{equation*}

Next, by demanding $h_{1, q}=0$ and $h_{2, q}=0$, 
we obtain the following system of 8 equations with 9 unknowns, $\lambda,\, m,\, 
\sigma,\, j_i,$ and $k_i$ for $i=0,1,2$:
\begin{equation}\label{coeffs 1}\begin{cases}
    h_{1,3} = -24\,b{\lambda}^{2}{m}^{2}\sigma\,j_{{2}}-24\,a{\lambda}^{2}{m}^{2}k_{
        {2}}+4\,j_{{2}}k_{{2}}, \\[5 pt]

    h_{1,2} = -6\,b{\lambda}^{2}{m}^{2}\sigma\,j_{{1}}-6\,a{\lambda}^{2}{m}^{2}k_{{1
    }}+3\,j_{{1}}k_{{2}}+3\,j_{{2}}k_{{1}}
    , \\[5 pt]

    h_{1,1} = 16\,b{\lambda}^{2}{m}^{2}\sigma\,j_{{2}}+16\,a{\lambda}^{2}{m}^{2}k_{{
        2}}-8\,b{\lambda}^{2}\sigma\,j_{{2}}-8\,a{\lambda}^{2}k_{{2}}-2\,
        \sigma\,j_{{2}}+2\,j_{{0}}k_{{2}} \\
        \qquad +2\,j_{{1}}k_{{1}}+2\,j_{{2}}k_{{0}}+
        2\,k_{{2}}
        , \\[5 pt]

    h_{1,0} = 2\,b{\lambda}^{2}{m}^{2}\sigma\,j_{{1}}+2\,a{\lambda}^{2}{m}^{2}k_{{1}
    }-b{\lambda}^{2}\sigma\,j_{{1}}-a{\lambda}^{2}k_{{1}}-\sigma\,j_{{1}}+
    j_{{0}}k_{{1}}+j_{{1}}k_{{0}}+k_{{1}}
    , \\[5 pt]

    h_{2,3} = -24\,d{\lambda}^{2}{m}^{2}\sigma\,k_{{2}}-24\,c{\lambda}^{2}{m}^{2}j_{
        {2}}+2\,k_{{2}}^{2}
        , \\[5 pt]

    h_{2,2} = -6\,d{\lambda}^{2}{m}^{2}\sigma\,k_{{1}}-6\,c{\lambda}^{2}{m}^{2}j_{{1
    }}+3\,k_{{1}}k_{{2}}
    , \\[5 pt]

    h_{2,1} = 16\,d{\lambda}^{2}{m}^{2}\sigma\,k_{{2}}+16\,c{\lambda}^{2}{m}^{2}j_{{
        2}}-8\,d{\lambda}^{2}\sigma\,k_{{2}}-8\,c{\lambda}^{2}j_{{2}}-2\,
        \sigma\,k_{{2}}+2\,k_{{0}}k_{{2}}\\
        \qquad +k_{{1}}^{2}+2\,j_{{2}}
        , \\[5 pt]

    h_{2,0} = 2\,d{\lambda}^{2}{m}^{2}\sigma\,k_{{1}}+2\,c{\lambda}^{2}{m}^{2}j_{{1}
    }-d{\lambda}^{2}\sigma\,k_{{1}}-c{\lambda}^{2}j_{{1}}-\sigma\,k_{{1}}+
    k_{{0}}k_{{1}}+j_{{1}}\,.
\end{cases}\end{equation}

The exact periodic traveling-wave solutions to \eqref{eta-w-system}
could then be established by solving the system of nonlinear 
equations \eqref{coeffs 1} with the help of the computer software Maple.
As there is one degree of freedom, any of the 9 unknowns
can be chosen as a free parameter. In most physical settings, it is 
desirable to have the elliptic modulus $m$ as the free parameter. 
With this in mind, the solutions found in the next section
will be given in terms of $m$, along with the systems' parameters
$a,\, b,\, c,$ and $d$. 
In Section 4.1, a solution is found with $j_1=k_1=0$.
For this special case, \eqref{coeffs 1} is reduced 
to a system of 4 equations with 7 unknowns which creates two new
degrees of freedom. In this case $\lambda$ and $\sigma$, the wavelength 
and the speed of the wave respectively, will be chosen along with $m$ 
as the free parameters in our solutions.  Furthermore, 
semi-trivial solutions for the case when $c \ne 0$
and $a^2 + b^2 = 0$ will be presented in Section 4.3.

\subsection{The case when $c = 0$}
When $c=0$, system \eqref{eta-w-system} reduces to
\begin{equation}\label{eta-w-system-c0}\begin{cases}
    -\sigma \eta' + w' + (\eta w)' + aw''' + b \sigma \eta''' = 0 \,,\\
    -\sigma w' + \eta' + ww' + d \sigma w''' = 0 \,.
\end{cases}\end{equation}
In addition, \eqref{eqn:h-pq} will take on the form
\begin{equation}\label{eqn:h-pq-c0}
    \left\{
    \begin{matrix}
    \begin{split}
        &- \lambda \sna \dna \sum_{q = 0}^{2n-1} \tilde{h}_{1, q} \cnap{q} = 0, \\
        &- \lambda \sna \dna \sum_{q = 0}^{2n-1} \tilde{h}_{2, q} \cnap{q} = 0,
    \end{split}
    \end{matrix}
    \right.
\end{equation}
where $\tilde{h}_{1,q} = h_{1,q} \rvert_{c=0}$ and $\tilde{h}_{2,q} = h_{2,q} 
\rvert_{c=0}$. 

Notice that as \eqref{eqn:h-pq-c0} must hold true for all $(\lambda \xi,m)$,
it must be the case that $\tilde{h}_{1,q}=0$ and $\tilde{h}_{2,q}=0$ for each $q$. 
Using this, the following theorem is established. (Recall that $j_r$ and $k_r$ are 
the coefficients of $\cn^r$ as given in \eqref{eqn:eta-w} for $r \in \Z_{\geq 0}$.)

\begin{theorem}\label{theorem2}
    Suppose $c = 0$, then the periodic traveling-wave ansatz \eqref{eqn:eta-w}
    will take on one of the following forms depending on the values of $b$ and $d$:
    \begin{enumerate}[(i)]
        \item If $b^2 + d^2 \ne 0$, then $j_r=0$ for all integers $r \geq 5$, and $k_r = 0$
            for all integers $r \geq 3$. Thus, \eqref{eqn:eta-w} reduces to
            \begin{equation*}
                \begin{split} 
                \eta(\x) &= j_0 + j_1 \cn(\l \x, m) + j_2 \cn^2 (\l \x, m)
                + j_3 \cn^3 (\l \x, m) + j_4 \cn^4 (\l \x, m) \\ 
                & \qquad \text{and} \quad
                w(\x) = k_0 + k_1 \cn(\l \x, m) + k_2 \cn^2 (\l \x, m)\,.
                \end{split}
            \end{equation*}
        \item If $b^2 + d^2 = 0$, then $j_r = k_r = 0$ for all integers 
            $r \geq 1$.  Consequently, only the trivial solution exists 
            in this case.
    \end{enumerate}
\end{theorem}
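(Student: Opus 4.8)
The plan is to run the leading-order balancing argument used for Theorem~\ref{theorem1}, adapted to the fact that setting $c=0$ removes the dispersive $c\eta'''$ term from the second equation of \eqref{eta-w-system-c0}. Write $P$ and $Q$ for the true degrees of $\eta$ and $w$ in $\cn$ (so $j_P\ne 0$ and $k_Q\ne 0$) and record the relevant orders: in the first equation $\rho((\eta w)')=P+Q-1$, $\rho(\eta''')=P+1$ (present iff $b\ne0$), and $\rho(w''')=Q+1$ (present iff $a\ne0$); in the second equation $\rho(ww')=2Q-1$, $\rho(\eta')=P-1$, and $\rho(w''')=Q+1$ (present iff $d\ne0$). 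Since \eqref{eqn:h-pq-c0} forces every $\tilde h_{1,q}$ and $\tilde h_{2,q}$ to vanish, whenever a single term sits strictly above all others at the top order its leading coefficient must be zero, contradicting $j_P\ne0$ or $k_Q\ne0$ and thereby lowering a degree.

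First I would prove $Q\le 2$, i.e.\ $k_r=0$ for $r\ge 3$. Assuming $Q\ge3$, I would split on the size of $P$: if $P>2$ then $(\eta w)'$ strictly dominates the first equation (its order $P+Q-1$ exceeds both $P+1$ and $Q+1$), forcing $j_Pk_Q=0$; if $P=2$ the first equation can balance only through $aw'''$, but then in the second equation $ww'$ at order $2Q-1$ strictly dominates $\eta'$ and $w'''$, forcing $k_Q=0$; and the residual cases $P\le1$ reduce $\eta$ to a constant, after which $ww'$ again stands alone in the second equation. Each branch contradicts $k_Q\ne0$, so $Q\le2$. With $Q\le2$ in hand, all of $ww'$ and $w'''$ in the second equation have order at most $3$, so if $P-1>3$ the term $\eta'$ is alone at the top and forces $j_P=0$; iterating yields $P\le4$, that is $j_r=0$ for $r\ge5$. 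This settles part~(i), and I would remark that $P=4$ is attained only when $b\ne0$ (so that $(\eta w)'$ and $b\sigma\eta'''$ may cancel at order $5$), consistent with $b^2+d^2\ne0$.

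For part~(ii) I would set $b=d=0$ and note that the relations \eqref{abcd-parameters} together with $b=c=d=0$ force $a=\tfrac13\ne0$. With both balancing terms $\eta'''$ and $w'''$ now absent from their respective equations, the second equation can only match $ww'$ against $\eta'$, forcing $P=2Q$, and then $(\eta w)'$ strictly dominates the first equation as soon as $Q\ge2$; hence $Q\le1$ and $P\le2$. It then remains to rule out the single surviving candidate $(P,Q)=(2,1)$. Substituting $\eta=j_0+j_1\cn+j_2\cn^2$ and $w=k_0+k_1\cn$ into \eqref{eta-w-system-c0}, the $\cn^1$ coefficient of the second equation gives $j_2=-\tfrac12 k_1^2$, while the $\cn^2$ coefficient of the first equation (whose only contributions are from $(\eta w)'$ and $aw'''$) yields $3j_2k_1-6a\lambda^2m^2k_1=0$, i.e.\ $k_1\big(\tfrac32 k_1^2+6a\lambda^2m^2\big)=0$. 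Since $a=\tfrac13>0$ and $\lambda,m\ne0$, the bracket is strictly positive, so $k_1=0$; then $w$ is constant, the second equation forces $\eta$ constant, and only the trivial solution remains.

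The main obstacle will be the bookkeeping in establishing $Q\le2$: unlike in Theorem~\ref{theorem1}, where the $c\eta'''$ term let the second equation annihilate high powers of $\eta$ directly, here the second equation reaches $\eta$ only through the low-order $\eta'$ term, so the reduction of $\deg w$ is entangled with $\deg\eta$ and must be carried out by tracking both degrees simultaneously (equivalently, by iterating the order count as in the proof of Theorem~\ref{theorem1}). The closing computation in part~(ii) is routine once $a=\tfrac13$ is invoked; the point to emphasize is that it is precisely this parameter relation, and not merely $b^2+d^2=0$, that excludes the otherwise-admissible leading-order balance at $(P,Q)=(2,1)$.
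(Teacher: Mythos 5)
Your argument is correct and rests on the same mechanism as the paper's proof --- every coefficient $\tilde h_{1,q}$, $\tilde h_{2,q}$ must vanish, so whenever one term strictly dominates the top power of $\cn$ in an equation its leading coefficient kills a degree --- but you organize it differently. For part (i) the paper runs an iterative degree reduction, repeatedly recomputing the six orders $\rho(\eta')$, $\rho(w')$, $\rho(\eta''')$, $\rho(w''')$, $\rho((\eta w)')$, $\rho(ww')$ through $i$ iterations with a separate parity case split on $n$, and stops when the orders balance at $\rho(\eta)=4$, $\rho(w)=2$; you instead argue directly on the true degrees $(P,Q)$, disposing of $Q\ge3$ by a short trichotomy on $P$ (the $(\eta w)'$ term dominates the first equation when $P>2$, and $ww'$ dominates the second equation otherwise) and then reading off $P\le4$ from the second equation alone. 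This is cleaner and avoids the bookkeeping you worried about, since with $c=0$ the second equation bounds $P$ in one step once $Q\le2$ is known. For part (ii) the routes diverge more: the paper substitutes the part-(i) bounds into the explicit coefficient list \eqref{coeffs 2} and eliminates $j_4,k_2,j_3,j_2,k_1,j_1$ one at a time, whereas you first sharpen the degree bounds to $Q\le1$, $P=2Q$ by dominant balance (using that both third-derivative terms disappear from their respective equations when $b=d=0$) and only then perform the explicit computation for the single surviving candidate $(P,Q)=(2,1)$. Both proofs close with the identical key step --- $j_2=-\tfrac12k_1^2$ from the second equation and the positivity of $\tfrac32k_1^2+2\lambda^2m^2$ (using $a=\tfrac13$, $\lambda,m>0$) forcing $k_1=0$ --- and you correctly emphasize that it is the constraint $a=\tfrac13$ from \eqref{abcd-parameters}, not merely $b=d=0$, that excludes this last balance. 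The only cosmetic gap is that your final sentence of part (ii) should also note, as the paper does via $\tilde h_{2,0}=j_1$, that the constant-$w$ case forces $\eta'\equiv0$ and hence $j_1=0$ as well; your remark that ``the second equation forces $\eta$ constant'' covers this.
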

\begin{proof}
    \textit{(i)} Let $\rho(\eta')$ denote the largest power of $\cn$ 
    in $\eta'$. Then,
    \begin{center}
        $ \rho(\eta')=\rho(w')=n-1 $, \quad $ \rho(\eta''')=\rho(w''')=n+1 $, \quad and \quad  
        $ \rho((\eta w)') = \rho(ww')=2n-1 $.
    \end{center}
    If $n>2$ then $2n -1> n+1$, which implies the coefficient of
    $\cn^{2n-1}$ in the second equation of \eqref{eqn:h-pq-c0} will solely
    come from the $ww'$-term. Thus, $\tilde{h}_{2, 2n-1} =
    n k_n^2$. Setting this coefficient equal to zero 
    gives $k_{n}=0$. Recalculating the above orders yields
    \begin{center}
        $ \rho(\eta')= n-1$, \quad $\rho(w')=n-2$, \quad $ \rho(\eta''')=n+1$, \quad 
        $\rho(w''')=n $, \quad  
        $ \rho((\eta w)') = 2n-2$, \quad and \quad $\rho(ww')=2n-3 $.
    \end{center}
    After $i$ iterations of this argument, the orders 
    become
    \begin{center}
        $ \rho(\eta')= n-1$, \quad $\rho(w')=n-2-i$, \quad 
        $ \rho(\eta''')=n+1$, \quad $\rho(w''')=n-i $, \quad  
        $ \rho((\eta w)') = 2n-2-i$, \quad and \quad $\rho(ww')=2n-3-2i $.
    \end{center}
    In the proof of Theorem \ref{theorem1}, it was shown that this argument
    can be repeated until $\rho(ww') \leq \rho(\eta''')$, but for the $c=0$
    case, the $\eta'''$-term is no longer present in the second equation of 
    \eqref{eta-w-system}. Thus, the argument will break when $\rho\big((\eta w)'\big) \leq 
    \rho(\eta')$.  Now, consider the two possibilities for $n$.
    \begin{itemize}
        \item If n is even: let $z \in {\Z}_{\geq 1}$ such that 
        $n=2z$. Then $\rho(ww')=2n-3-2i=4z-3-2i$ and $\rho(\eta')=n-1=2z-1$. 
        Since $\rho(ww')$ and $\rho(\eta')$ are both odd, there exists an 
        integer $i$ such that $\rho(ww') = \rho(\eta')$. Solving for $i$ gives
        \begin{align*}
            4z-3-2i &= 2z-1.
        \end{align*}
        Thus, $\rho(ww')=\rho(\eta')$ when $i=z-1$. Recalculating the 
        orders yields
        \begin{center}
            $ \rho(\eta')= 2z-1$, \quad $\rho(w')=z-1$, \quad 
            $ \rho(\eta''')=2z+1$, \quad $\rho(w''')=z+1 $, \quad  
            $ \rho((\eta w)') = 3z-1$, \quad and \quad $\rho(ww')=2z-1 $.
        \end{center}
        \item If n is odd: let $z \in {\Z}_{\geq 1}$ such that 
        $n=2z+1$. Then $\rho(ww')=2n-3-2i=4z-1-2i$ and $\rho(\eta')=n-1=2z$. 
        Since $\rho(ww')$ is odd and $\rho(\eta')$ is even, there 
        will not be an integer $i$ such that $\rho(ww') = \rho(\eta')$. So this
        argument will break when $\rho(ww') < \rho(\eta')$, which will happen when
        \begin{align*}
            4z-1-2i &< 2z,
        \end{align*}
        which yields $i > z - \frac{1}{2}$. Thus, $i = z$ will be the first iteration 
        such that the above inequality is achieved. From here, the orders become
        \begin{center}
            $ \rho(\eta')= 2z$, \quad $\rho(w')=z-1$, \quad 
            $ \rho(\eta''')=2z+2$, \quad $\rho(w''')=z+1 $, \quad  
            $ \rho((\eta w)') = 3z$, \quad and \quad $\rho(ww')=2z-1 $.
        \end{center}
        However, $\rho(\eta')>\rho(ww')$, so the coefficient of $\cn^{2z}$ 
        in the second equation will solely come from 
        the $\eta'$-term, which implies $\tilde{h}_{2, 2z} = (2z+1) j_{2z+1}$. 
        It then follows that $j_{2z+1}=0$. Recalculating the orders yields
        \begin{center}
            $ \rho(\eta')= 2z-1$, \quad $\rho(w')=z-1$, \quad 
            $ \rho(\eta''')=2z+1$, \quad $\rho(w''')=z+1 $, \quad  
            $ \rho((\eta w)') = 3z-1$, \quad and \quad $\rho(ww')=2z-1 $.
        \end{center}
    \end{itemize}
    So, regardless of whether $n$ is even or odd, the same reduction of orders is achieved.
    Now the orders of the second equation are balanced, but the orders in the first 
    equation are not. Specifically, $\rho((\eta w)')$ is greater than any 
    other term in the first equation. This means that the 
    coefficient of $\cn^{3z-1}$ in the first equation of \eqref{eqn:h-pq} 
    will solely come from the $(\eta w)'$-term, which implies $\tilde{h}_{1, 3z-1} = 3z
    j_{2z}k_{z}$. Demanding $\tilde{h}_{1, 3z-1}=0$ reveals that either $j_{2z} = 0$
    or $k_{z} = 0$.
    \begin{itemize}
        \item If $k_{z} = 0$: recalculating the orders
        \begin{center}
            $ \rho(\eta')= 2z-1$, \quad $\rho(w')=z-2$, \quad 
            $ \rho(\eta''')=2z+1$, \quad $\rho(w''')=z $, \quad  
            $ \rho((\eta w)') = 3z-2$, \quad and \quad $\rho(ww')=2z-3 $.
        \end{center}
        Like in the odd case above, $\rho(\eta')>\rho(ww')$, so the coefficients of 
        $\cn^{2z-1}$ and $\cn^{2z-2}$ are solely given by the $\eta'$-term. 
        This implies that $\tilde{h}_{2, 2z-1} = 2z j_{2z}$, so it follows that $j_{2z} = 0$.
        Similarly since $\tilde{h}_{2, 2z-2} = (2z-1) j_{2z-1}$, the same 
        logic yields $j_{2z-1} = 0$.  Thus,
        \begin{center}
            $ \rho(\eta')= 2z-3$, \quad $\rho(w')=z-2$, \quad 
            $ \rho(\eta''')=2z-1$, \quad $\rho(w''')=z $, \quad  
            $ \rho((\eta w)') = 3z-4$, \quad and \quad $\rho(ww')=2z-3 $.
        \end{center}
        \item If $j_{2z}=0$: recalculating the orders
        \begin{center}
            $ \rho(\eta')= 2z-2$, \quad $\rho(w')=z-1$, \quad 
            $ \rho(\eta''')=2z$, \quad $\rho(w''')=z+1 $, \quad  
            $ \rho((\eta w)') = 3z-2$, \quad and \quad $\rho(ww')=2z-1 $.
        \end{center}
        Now, $\rho(ww')>\rho(\eta')$, so it is similarly deduced that  
        $\tilde{h}_{2, 2z-1} = z k_z^2$ which implies $k_{z} = 0$. 
        Therefore,
        \begin{center}
            $ \rho(\eta')= 2z-2$, \quad $\rho(w')=z-2$, \quad 
            $ \rho(\eta''')=2z$, \quad $\rho(w''')=z $, \quad  
            $ \rho((\eta w)') = 3z-3$, \quad and \quad $\rho(ww')=2z-3 $.
        \end{center}
        Again $\rho(\eta')>\rho(ww')$ so $j_{2z-1} = 0$. 
        Consequently,
        \begin{center}
            $ \rho(\eta')= 2z-3$, \quad $\rho(w')=z-2$, \quad 
            $ \rho(\eta''')=2z-1$, \quad $\rho(w''')=z $, \quad  
            $ \rho((\eta w)') = 3z-4$, \quad and \quad $\rho(ww')=2z-3 $.
        \end{center}
    \end{itemize}
    So, regardless of whether 
    $j_{2z} = 0$ or $k_{z} = 0$, the same order reduction is achieved. 
    After $i$ iterations of this argument, the orders become
    \begin{center}
        $ \rho(\eta')= 2z-3-2i$, \quad $\rho(w')=z-2-i$, \quad 
        $ \rho(\eta''')=2z-1-2i$, \quad $\rho(w''')=z-i $, \quad  
        $ \rho((\eta w)') = 3z-4-3i$, \quad and \quad $\rho(ww')=2z-3-2i $.
    \end{center}
    By balancing the orders, there are two instances 
    in which this argument breaks. From the first equation, the argument
    breaks when $\rho((\eta w)') \leq \rho(\eta''')$, and
    from the second equation, the argument 
    will break when $\rho(ww') = \rho(\eta') \leq \rho(w''')$.
    \begin{center}
        $\rho((\eta w)') \leq \rho(\eta'''):
            \quad 3z-4-3i \leq 2z-1-2i \quad \iff \quad  z-3 \leq i$; \\
        $\rho(ww') = \rho(\eta') \leq \rho(w'''):
            \quad 2z-3-2i \leq z-i \quad \iff \quad z-3 \leq i$. \\
    \end{center}
    So this argument will break after $(z-3)$ iterations for both cases.
    Plugging $i=z-3$ and recalculating the orders a final time yields
    \begin{equation*}
        \rho(\eta')= 3, \quad \rho(w')=1, \quad 
        \rho(\eta''')=5, \quad \rho(w''')=3 , \quad  
        \rho((\eta w)') = 5, \quad \text{and} \quad \rho(ww')=3.
    \end{equation*}
    Since $c \ne 0$ and $b^2 + d^2 \ne 0$, the 
    highest order terms in both equations are balanced. Furthermore,
    since $\rho(\eta') = 3$ and $\rho(w') = 1$, the identities in \eqref{derivatives}
    imply that $\rho(\eta) = 4$ and $\rho(w) = 2$.
    Therefore, if $c \ne 0$ and $b^2 + d^2 \ne 0$,
    then $j_r = 0$ for all $r \geq 5$ and $k_r =0$ for all $r \geq 3$. \\[10 pt]
    \textit{(ii)}
    If $b^2 + d^2 = 0$, then $b = d = 0$. Recall that the parameters $a,\, b,\, c,$ 
    and $d$ must satisfy 
    the three conditions given in \eqref{abcd-parameters}. Since $b=c=d=0$, it follows 
    that $a=\frac{1}{3}$. With this, system \eqref{eta-w-system-c0} reduces to
    \begin{equation*}\begin{cases}
        -\sigma \eta' + w' + (\eta w)' + \dfrac{1}{3} w''' = 0 \,,\\
        -\sigma w' + \eta' + ww' = 0 \,.
    \end{cases}\end{equation*}
    Now, $\tilde{h}_{1,5}$ and $\tilde{h}_{2,3}$ are given by
    \begin{equation*}
        \tilde{h}_{1,5} = 6 j_4 k_2 \quad \text{and} \quad 
        \tilde{h}_{2,3} = 4 j_4 + 2 k_2^2.
    \end{equation*}
    Requiring these both to be zero implies that $j_4 = k_2 = 0$. Substituting this 
    into the rest of the coefficients, it follows that that $\tilde{h}_{2,2} = 3 j_3$ which 
    implies $j_3 = 0$. Now, $\tilde{h}_{1,2}$ and $\tilde{h}_{2,1}$ are given by
    \begin{equation*}
        \tilde{h}_{1,2} = -k_1 (2\lambda^2 m^2 - 3j_2) \quad \text{and} \quad
        \tilde{h}_{2,1} = {k_1}^2 + 2 j_2.
    \end{equation*}
    Requiring $\tilde{h}_{2,1}=0$ yields $j_2 = -\frac{1}{2} {k_1}^2$, and substituting this into
    $\tilde{h}_{1,2}$ gives 
    \begin{equation*}
        \tilde{h}_{1,2} = -\frac{1}{2} k_1 (4 \lambda^2 m^2 + 3 {k_1}^2).
    \end{equation*} 
    Since $\lambda, m >0$ and $k_1 \in \R$, it follows that $4 \lambda^2 m^2 + 3 {k_1}^2 
    > 0$. Thus, $k_1 = 0$, and similarly, $j_2 = 0$. 
    Finally $\tilde h_{2,0} = j_1$, forcing $j_1 = 0$.
    
    Therefore, when $c \ne 0$ and $b^2 + d^2 = 0$, it is deduced that
    $j_r = k_r = 0$ for all integers $r \geq 1$ which implies both $\eta$ and $w$
    are constant and consequently, only the trivial solution exists. \\
\end{proof}

The above theorem shows that periodic traveling-wave solutions to 
\eqref{eta-w-system-c0} may only occur when $c = 0$ and $b^2 + d^2 \ne 0$, so 
we will focus on this case. Substituting $j_r = 0$ for all $r \geq 5$ and $k_r = 0$
for all $r \geq 3$ into \eqref{eqn:h-pq} yields
\begin{equation*}
    \left\{
    \begin{matrix}
    \begin{split}
        &- \lambda \sna \dna \sum_{q = 0}^{5} \tilde{h}_{1, q} \cnap{q} = 0, \\
        &- \lambda \sna \dna \sum_{q = 0}^{3} \tilde{h}_{2, q} \cnap{q} = 0.
    \end{split}
    \end{matrix}
    \right.
\end{equation*}

Next, by demanding $\tilde{h}_{1,q}=0$ and $\tilde{h}_{2,q}=0$, 
we obtain the following system of 10 equations 
and 11 unknowns, $\lambda,\, m,\, \sigma,\, j_i$ for $i=0,1,\dots,4$, and 
$k_i$ for $i=0,1,2$:
\begin{equation}\label{coeffs 2}\begin{cases}
    \tilde{h}_{1, 5} = -6\,j_{{4}} \left( 20\,b{\lambda}^{2}{m}^{2}\sigma-k_{{2}} \right) 
    \,, \\[5 pt]

    \tilde{h}_{1, 4} = -60\,b{\lambda}^{2}{m}^{2}\sigma\,j_{{3}}+5\,j_{{3}}k_{{2}}+5\,j_{{4}}
    k_{{1}}
    \,, \\[5 pt]

    \tilde{h}_{1, 3} = -24\,b{\lambda}^{2}{m}^{2}\sigma\,j_{{2}}+128\,b{\lambda}^{2}{m}^{2}
    \sigma\,j_{{4}}-24\,a{\lambda}^{2}{m}^{2}k_{{2}}-64\,b{\lambda}^{2}
    \sigma\,j_{{4}}-4\,\sigma\,j_{{4}}+4\,j_{{2}}k_{{2}}\\
    \qquad +4\,j_{{3}}k_{{1}}+4\,j_{{4}}k_{{0}}
    \,, \\[5 pt]

    \tilde{h}_{1, 2} = -6\,b{\lambda}^{2}{m}^{2}\sigma\,j_{{1}}+54\,b{\lambda}^{2}{m}^{2}
    \sigma\,j_{{3}}-6\,a{\lambda}^{2}{m}^{2}k_{{1}}-27\,b{\lambda}^{2}
    \sigma\,j_{{3}}-3\,\sigma\,j_{{3}}+3\,j_{{1}}k_{{2}}\\
    \qquad +3\,j_{{2}}k_{{1}}+3\,j_{{3}}k_{{0}}
    \,, \\[5 pt]

    \tilde{h}_{1, 1} = 16\,b{\lambda}^{2}{m}^{2}\sigma\,j_{{2}}-24\,b{\lambda}^{2}{m}^{2}
    \sigma\,j_{{4}}+16\,a{\lambda}^{2}{m}^{2}k_{{2}}-8\,b{\lambda}^{2}
    \sigma\,j_{{2}}+24\,b{\lambda}^{2}\sigma\,j_{{4}}\\
    \qquad -8\,a{\lambda}^{2}k_{
    {2}}-2\,\sigma\,j_{{2}}+2\,j_{{0}}k_{{2}}+2\,j_{{1}}k_{{1}}+2\,j_{{2}}
    k_{{0}}+2\,k_{{2}}
    \,, \\[5 pt]

    \tilde{h}_{1, 0} = 2\,b{\lambda}^{2}{m}^{2}\sigma\,j_{{1}}-6\,b{\lambda}^{2}{m}^{2}\sigma
    \,j_{{3}}+2\,a{\lambda}^{2}{m}^{2}k_{{1}}-b{\lambda}^{2}\sigma\,j_{{1}
    }+6\,b{\lambda}^{2}\sigma\,j_{{3}}-a{\lambda}^{2}k_{{1}}-\sigma\,j_{{1
    }}\\
    \qquad +j_{{0}}k_{{1}}+j_{{1}}k_{{0}}+\sigma k_{{1}}^{2}\,k_{{2}}
    \,, \\[5 pt]

    \tilde{h}_{2, 3} = -24\,d{\lambda}^{2}{m}^{2}\sigma\,k_{{2}}+2\,k_{{2}}^{2}+4\,j_{{4}}
    \,, \\[5 pt]

    \tilde{h}_{2, 2} = -6\,d{\lambda}^{2}{m}^{2}\sigma\,k_{{1}}+3\,k_{{1}}k_{{2}}+3\,j_{{3}}
    \,, \\[5 pt]

    \tilde{h}_{2, 1} = 16\,d{\lambda}^{2}{m}^{2}\sigma\,k_{{2}}-8\,d{\lambda}^{2}\sigma\,k_{{
        2}}-2\,\sigma\,k_{{2}}+2\,k_{{0}}k_{{2}}+k_{{1}}^{2}+2\,j_{{2}}
        \,, \\[5 pt]

    \tilde{h}_{2, 0} = 2\,d{\lambda}^{2}{m}^{2}\sigma\,k_{{1}}-d{\lambda}^{2}\sigma\,k_{{1}}-
    \sigma\,k_{{1}}+k_{{0}}k_{{1}}+j_{{1}}
    \,.
\end{cases}\end{equation}

Numerical computation shows that if $j_1 \ne 0$ or $j_3 \ne 0$, then there does not 
exist a solution to \eqref{coeffs 2}. Similarly, numerical computation shows if $k_1 \ne 0$, 
then the only solution to \eqref{coeffs 2} is one such that $\sigma = 0$, a contradiction.
So, we will proceed under the assumption that $j_1 = j_3 = k_1 = 0$. In particular, the only 
non-trivial periodic traveling wave solutions to \eqref{eta-w-system-c0} are of the form 
\begin{equation}\label{eqn:eta-w-2}
    \eta(\x) = j_0 + j_2 \cn^2 (\l \x, m) + j_4 \cn^4 (\l \x, m) \\ 
     \quad \text{and} \quad
    w(\x) = k_0 + k_2 \cn^2 (\l \x, m)\,.
\end{equation}
Moreover, this reduces \eqref{coeffs 2} to the following system of 5 equations with 8 unknowns:
\begin{equation}\label{coeffs 2 red}\begin{cases}
    \bar{h}_{1, 5} = -6\,j_{{4}} \left( 20\,b{\lambda}^{2}{m}^{2}\sigma-k_{{2}} \right) 
    \,, \\[5 pt]

    \bar{h}_{1, 3} = -24\,b{\lambda}^{2}{m}^{2}\sigma\,j_{{2}}+128\,b{\lambda}^{2}{m}^{2}
    \sigma\,j_{{4}}-24\,a{\lambda}^{2}{m}^{2}k_{{2}}-64\,b{\lambda}^{2}
    \sigma\,j_{{4}}-4\,\sigma\,j_{{4}}+4\,j_{{2}}k_{{2}}\\
    \qquad +4\,j_{{4}}k_{{0}}
    \,, \\[5 pt]

    \bar{h}_{1, 1} = 16\,b{\lambda}^{2}{m}^{2}\sigma\,j_{{2}}-24\,b{\lambda}^{2}{m}^{2}
    \sigma\,j_{{4}}+16\,a{\lambda}^{2}{m}^{2}k_{{2}}-8\,b{\lambda}^{2}
    \sigma\,j_{{2}}+24\,b{\lambda}^{2}\sigma\,j_{{4}}\\
    \qquad -8\,a{\lambda}^{2}k_{
    {2}}-2\,\sigma\,j_{{2}}+2\,j_{{0}}k_{{2}}+2\,j_{{2}}
    k_{{0}}+2\,k_{{2}}
    \,, \\[5 pt]

    \bar{h}_{2, 3} = -24\,d{\lambda}^{2}{m}^{2}\sigma\,k_{{2}}+2\,{k_{{2}}}^{2}+4\,j_{{4}}
    \,, \\[5 pt]

    \bar{h}_{2, 1} = 16\,d{\lambda}^{2}{m}^{2}\sigma\,k_{{2}}-8\,d{\lambda}^{2}\sigma\,k_{{
        2}}-2\,\sigma\,k_{{2}}+2\,k_{{0}}k_{{2}}+2\,j_{{2}}
        \,, \\[5 pt]

    \bar{h}_{1,4} = \bar{h}_{1,2} = \bar{h}_{1,0} = \bar{h}_{2,2} = \bar{h}_{2,0} = 0\,,
\end{cases}\end{equation}
where $\bar{h}_{1, q} = \tilde{h}_{1,q} \rvert_{j_1 = j_3 = k_1 = 0}$ and
$\bar{h}_{2, q} = \tilde{h}_{2,q} \rvert_{j_1 = j_3 = k_1 = 0}$.

The exact periodic traveling-wave solutions to \eqref{eta-w-system-c0} could 
then be established by solving the system of nonlinear equations \eqref{coeffs 2 red} 
with the help of the computer software Maple. As there are three degrees of freedom,
any combination of the 8 unknowns could be chosen as free parameters. For consistency,
we will choose $\lambda$, $m$, and $\sigma$ as free parameters, like in the previous section.

\section{Exact Jacobi Elliptic Solutions}
Finally, explicit solutions to the $abcd$-system \eqref{abcd-system} are given for the cases 
$c \ne 0$ and $c=0$.
Discussion of these solutions, and how they limit to solitary wave solutions, will be left 
for the next section. In addition to the explicit solutions being stated, graphs of these solutions 
will also be provided.

\subsection{The case when $c \ne 0$}
Solving \eqref{coeffs 1}, our solution will take the form as stated by 
\eqref{eqn:eta-w-1}, where the parameters are found to be given by one of the 
following sets.
\begin{enumerate}
        \item When $ac(b-6d)(3b-2d),\, c(2m^2 - 1)(b+2d)(3b+2d) < 0$
            and $ (2m^2 - 1)(3b+2d)(b-d) \geq 0$
            with equality only when $b=d$\,:
        \begin{equation}\label{soln:4.1.1}
            \begin{cases}
                j_{{0}}=-{\dfrac {a\,(3\,b+2\,d)\,(21\,b-46\,d)+ 2\,c\,(3\,b-2\,d)\,(b-6\,d)}
                    {2 c \left( 3\,b-2\,d \right)  \left( b-6\,d \right)}}\,, \\[9 pt]

                j_{{1}}=- \tau_1 \tau_2 {\dfrac {12\,a\, m \left( 3\,b+2\,d \right)\sqrt {
                    \left( 2\,{m}^{2}-
                    1 \right)  \left( 3
                    \,b+2\,d \right)  \left( b-d \right) } }{ c
                    \left( b-6\,d \right) \left( 3\,b-2\,d \right)
                    \left( 2\,{m}^{2}-1 \right) }}
                    \,, \\[9 pt]

                j_{{2}}= {\dfrac {9\,a\,{m}^{2} \left( 3\,b+2\,d \right)}{c \left( 3\,b-2
                    \,d \right)  \left( 2\,{m}^{2}-1 \right) }}
                    \,, \\[9 pt]

                k_{{0}}= \tau_{{1}}{\dfrac { \left( 21\,b+8\,c+14\,d \right) \sqrt 
                {-2\,a c \left( b-6\,d \right)  \left( 3\,b-2\,d \right)}}{ 2\,c  
                \left( b-6\,d \right) \left( 3\,b-2\,d \right)}}
                \,, \\[9 pt]

                k_{{1}}=\tau_2 {\dfrac {6\,m\sqrt {-2\,a c  
                    \left( b-6\,d \right)  \left( 3\,b-2\,d \right) \left( 2\,{m}^{2}-1 \right)
                    \left( 3\,b+2\,d
                    \right)  \left( b-d \right) }}{c \left( 2\,{m}^{2}-1 \right) 
                    \left( b-6\,d \right)  \left( 3\,b-2\,d \right) }}
                    \,, \\[9 pt]

                k_{{2}}= -\tau_1 {\dfrac {9 \,m^2 \left( 3\,b+2\,d \right) \sqrt {-2\,a c 
                    \left( b-6\,d \right) \left( 3\,b-2\,d \right)}}{c \left( 2\,{m}^{2}-1 
                    \right)  \left( b-6\,d \right)  \left( 3\,b-2\,d\right) }}
                    \,, \\[9 pt]
                
                \lambda=\dfrac{1}{2}\sqrt {{\dfrac {-6\,\left( 3\,b+2\,d \right)}
                    {c \left( 2\,{m}^{2}-1\right)  \left( b+2\,d \right) }}}
                \,, \\[9 pt] 

                \sigma= \tau_1 {\dfrac {4\sqrt {-2\,a c \left( b-6\,d \right)  \left( 
                    3\,b-2\,d \right)}}{ \left( b-6\,d \right)  \left( 3\,b-2\,d
                     \right) }}
                    \,, \\[9 pt]
                    
                m \in (\,0,1\,] \,.
            \end{cases}
        \end{equation}
        Finding this solution requires taking the square root of two terms, which leads to 
        two independent plus or minuses. To avoid confusion, we define 
        \begin{equation*}
            \tau_1 = \{ \, 1,\, -1 \, \} \quad \text{and} \quad \tau_2 = \{ \, 1,\, -1 \, \},
        \end{equation*}
        to denote these independent plus or minuses.
        Two graphs of this solution can be found below in Figures 
        \ref{4.1.1a} and \ref{4.1.1b}, in which $\eta$ is graphed in blue and $w$ 
        in green. The explicit parameters chosen for each graph are given in the captions. 
        \begin{figure}[h]
            \centering
            \begin{subfigure}[b]{0.475\textwidth}
                \includegraphics[width=\textwidth]{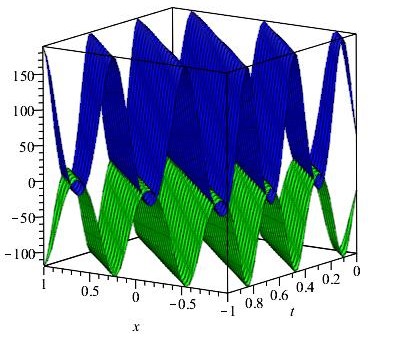}
                \caption{$\{ m=\frac{3}{4},\, a=-\frac{5}{6},\, b=1,\, c=-\frac{5}{6},\, d=1,$
                    \\ $\tau_1 = 1,\, \tau_2 = 1\}$}\label{4.1.1a}
            \end{subfigure}
            \begin{subfigure}[b]{0.475\textwidth}
                \includegraphics[width=\textwidth]{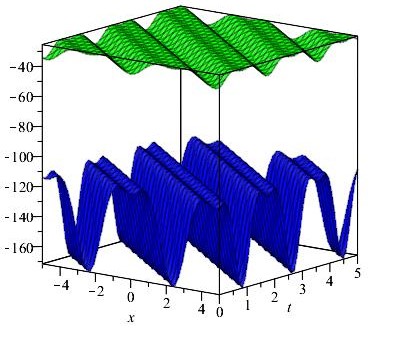}
                \caption{$\{ m=\frac{1}{4},\, a=-7,\, b=2,\, c=\frac{4}{3},\, d=4,$ 
                    \\ $\tau_1 = -1,\, \tau_2 = 1\}$}\label{4.1.1b}
            \end{subfigure}
            \caption{Graphs of solution \eqref{soln:4.1.1}}
        \end{figure} 

    \item When $8 a c + \sigma^2(b-2d) ^{2} > 0\,$: 
        \begin{equation}\label{soln:4.1.2}
            \begin{cases}
                j_1 = k_1 = 0\,, \\[9 pt]

                j_{{0}}= {\dfrac {1}{ 4\,c^2\left( 4\,a\,c + \sigma^2 \left({b}^{2}
                +4\,{d}^{2} \right) \pm
                \sigma \left( b+2\,d \right) \sqrt {8\,a\,c +  \sigma^2 
                \left( b-2\,d \right) ^{2}} \right)}} 
                \Big(-8\,{b}^{4}c\,{\lambda}^{2}{m}^{2}{\sigma}^{4} \\[9 pt]

                \qquad +16\,{b}^{3}c\,d\,{\lambda}^{2
                }{m}^{2}{\sigma}^{4}-64\,a\,{b}^{2}{c}^{2}{\lambda}^{2}{m}^{2}{\sigma}^{
                2}-32\,a\,b\,{c}^{2}d\,{\lambda}^{2}{m}^{2}{\sigma}^{2}-64\,a\,{c}^{2}{d}^{2}{
                \lambda}^{2}{m}^{2}{\sigma}^{2}\\[5 pt]

                \qquad +4\,{b}^{4}c\,{\lambda}^{2}{\sigma}^{4}-8
                \,{b}^{3}c\,d\,{\lambda}^{2}{\sigma}^{4}-64\,{a}^{2}{c}^{3}{\lambda}^{2}{m
                }^{2}+32\,a\,{b}^{2}{c}^{2}{\lambda}^{2}{\sigma}^{2}+16\,a\,b\,{c}^{2}d\,{
                \lambda}^{2}{\sigma}^{2}\\[5 pt]
                
                \qquad +32\,a\,{c}^{2}{d}^{2}{\lambda}^{2}{\sigma}^{2}+
                {b}^{4}{\sigma}^{4}-4\,{b}^{3}d\,{\sigma}^{4}+4\,{b}^{2}{d}^{2}{\sigma}^
                {4}+32\,{a}^{2}{c}^{3}{\lambda}^{2}+8\,a\,{b}^{2}c\,{\sigma}^{2}-8\,a\,b\,c\,d\,{
                \sigma}^{2}\\[5 pt]
                
                \qquad -4\,{b}^{2}{c}^{2}{\sigma}^{2}-16\,{c}^{2}{d}^{2}{\sigma}^{
                2}+8\,{a}^{2}{c}^{2}-16\,a\,{c}^{3} \mp \,\sigma \sqrt {8\,a\,c + \sigma^2 \left( b-2\,d \right) ^{2}}
                \Big( 8\,{b}^{3}c\,{\lambda}^{2}{m}^{2}{\sigma}^{2} \\[5 pt]

                \qquad +32\,a\,b\,{c}^{2}{\lambda}^{2}{m}^{2}+32\,a\,{c}^{2}d{\lambda}^{2}{m}^{2}-4\,{
                b}^{3}c\,{\lambda}^{2}{\sigma}^{2}-16\,a\,b\,{c}^{2}{\lambda}^{2}-16\,a\,{c}^{
                2}d\,{\lambda}^{2}-{b}^{3}{\sigma}^{2}\\[5 pt]

                \qquad +2\,{b}^{2}d\,{\sigma}^{2}-4\,a\,b\,c+4\,b\,{c}^{2}+8\,{c}^{2}d \Big) 
                \Big)\,, \\[9 pt]

                j_2 = \dfrac{3\, \lambda^2 m^2}{2\,c} \Big( 4\,a\,c + b\,{\sigma}^{2} 
                \left( b-2\,d \right) \pm b\,\sigma\,\sqrt {8\,a\,c + {\sigma}^{2}
                \left( b-2\,d \right) ^{2}} \Big)
                \,, \\[9 pt]

                k_{{0}}={\dfrac {1}{ 2\,c\left(\sigma \left( b+2\,d \right) 
                \pm \sqrt { 8\,a\,c + \sigma^2 \left( b-2\,d\right) ^{2}} 
                \right)}} \Big( -8\,{b}^{2}c\,{\lambda}^{2}{m}^{2}{\sigma}^{2}
                -32\,c\,{d}^{2}{\lambda}^{2}{m}^{2}{\sigma}^{2}\\[9 pt]
                
                \qquad -32\,a\,{c}^{2}{\lambda}^{2}{m}^{2}+4\,{b}^{2}c\,{
                \lambda}^{2}{\sigma}^{2}+16\,c\,{d}^{2}{\lambda}^{2}{\sigma}^{2}+16\,a\,{
                \lambda}^{2}{c}^{2}-{b}^{2}{\sigma}^{2}+2\,b\,c\,{\sigma}^{2}+2\,b\,d\,{\sigma
                }^{2} \\[5 pt]
                
                \qquad +4\,c\,d\,{\sigma}^{2}-4\,a\,c
                \mp \sigma \sqrt {8\,a\,c + {\sigma}^{2} \left( b-2\,d \right) ^{2}} 
                \Big( 8\,b\,c\,{\lambda}^{2}{m}^{2}+16\,c\,d\,{\lambda}^{2}{m}^{2}-4
                \,b\,c\,{\lambda}^{2} \\[5 pt]
                
                \qquad -8\,c\,d\,{\lambda}^{2}+b-2\,c \Big)\Big)
                \,, \\[9 pt]

                k_2 = 3\, \lambda^2 m^2 \left(\sigma\left( b+2\,d \right) \pm 
                \sqrt {8\,a\,c + \sigma^2 \left( b-2\,d \right) ^{2}}  \right)
                \,, \\[9 pt]
            
                \lambda > 0 ,\, \sigma \ne 0 ,\, m \in (\,0,1\,] \,.
            \end{cases}
        \end{equation}
        Above, the $\pm$ and $\mp$ should be chosen such that the top or bottom signs 
        are chosen together. 
        Two graphs of this solution can be found below in Figures 
        \ref{4.1.2a} and \ref{4.1.2b}, in which $\eta$ is graphed in blue and $w$ 
        in green. The explicit parameters chosen for each graph are given in the captions.
        \begin{figure}[h]
            \centering
            \begin{subfigure}[b]{0.475\textwidth}
                \includegraphics[width=\textwidth]{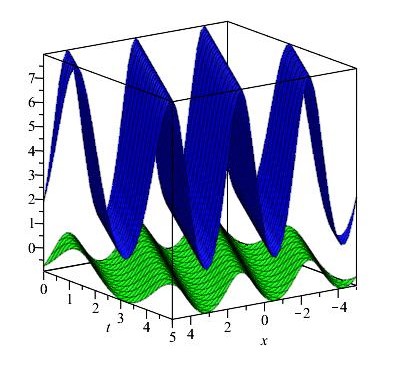}
                \caption{$\{ \lambda = 1,\, m=\frac{1}{\sqrt{2}},\, 
                    \sigma = 1,\, a=1,\, b=-\frac{8}{3},$\\ $c=1,\, d=1 \}$ and the 
                    top signs of $\pm$ and $\mp$}
                    \label{4.1.2a}
            \end{subfigure}
            \begin{subfigure}[b]{0.475\textwidth}
                \includegraphics[width=\textwidth]{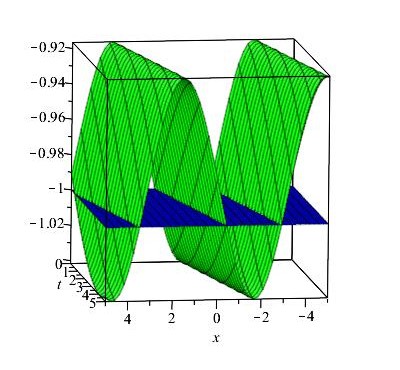}
                \caption{$\{ \lambda = \frac{1}{2},\, m=\frac{1}{4},\, 
                \sigma = -\frac{1}{3},\, a=0,\, b=-1,\, c=-\frac{2}{3},\, d=2\}$ and 
                the bottom signs of $\pm$ and $\mp$}
                \label{4.1.2b}
            \end{subfigure}
            \caption{Graphs of solution \eqref{soln:4.1.2}}
        \end{figure}
\end{enumerate}

\subsection{The case when $c=0$}
Solving \eqref{coeffs 2 red}, our solution will take the form as stated by 
\eqref{eqn:eta-w-2}, where the parameters are found to be given by one of the 
following sets.
\begin{enumerate}
    \item When $4 b - d \ne 0\,$:
    \begin{equation}\label{soln:4.2.1}
        \begin{cases}
            j_0 = \dfrac {1}{9\,{\sigma}^{2} \left( 4\,b-d \right) ^{2}}
            \Big(-32\, b\, \lambda^4 \sigma ^4 \left( 4\,b-d\right) ^{2} \left( 5\,b-3\,d \right)
            \left( 11\,{m}^{4}-11\,{m}^{2}-4 \right) \\[9 pt]
            \qquad + 3\,\sigma^2 \left( 4\,b-d \right)
            \big[  3\,d - 4\,b\left( 3+5\,a\,\lambda^2 \left( 2\,{m}^{2}-1 \right)
            \right) \big] + 9\,a^2 \Big) \,, \\[9 pt]

            j_2 = {\dfrac {20\,b\,{\lambda}^{2}{m}^{2}\left(  3\,a + 4\, \lambda^2 
                \sigma^2   \left( 4\,b-d \right) \left( 5\,b-3\,d \right) 
                \left( 2\,{m}^{2}-1 \right) \right)}{3 (4\,b-d)}  }
               \,, \\[9 pt]

            j_4 = -40\,b\,{\lambda}^{4}{m}^{4}{\sigma}^{2} \left( 5\,b-3\,d
            \right) \,, \\[9 pt]

            k_{{0}}= {\dfrac { -3\,a + \sigma^2 \left( 4\,b-d \right) 
            \left( {3} - 20\,b\,{\lambda}^{2} 
            \left( 2{m}^{2}-1 \right) \right)   
            }{ 3 \,\sigma \left( 4\,b-d \right)} }\,, \\[9 pt]

            k_{{2}}= 20\,b\,{\lambda}^{2}{m}^{2}\sigma \,, \\[9 pt]                    
            
            \lambda > 0 ,\, \sigma \ne 0 ,\, m \in (\,0,1\,] \,.
        \end{cases}
    \end{equation}
    Two graphs of this solution can be found below in Figures 
    \ref{4.2.1a} and \ref{4.2.1b}, in which $\eta$ is graphed in blue and $w$ 
    in green. The explicit parameters chosen for each graph are given in the captions. 
    \begin{figure}[h]
        \centering
        \begin{subfigure}[b]{0.475\textwidth}
            \includegraphics[width=\textwidth]{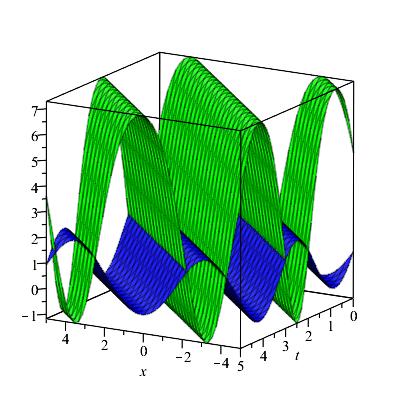}
            \caption{$\{ \lambda = \frac{1}{2},\, m=\frac{1}{2},\, 
            \sigma = -2,\, a=1, b=-1,\, d=\frac{1}{3}\}$}\label{4.2.1a}
        \end{subfigure}
        \begin{subfigure}[b]{0.475\textwidth}
            \includegraphics[width=\textwidth]{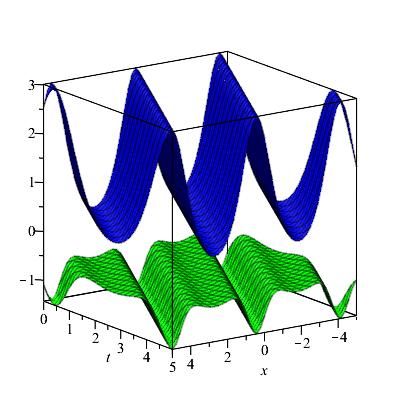}
            \caption{$\{ \lambda = 1,\, m=\frac{9}{10},\, 
            \sigma = 1,\, a=0, b=\frac{1}{6},\, d=\frac{1}{6}\}$}\label{4.2.1b}
        \end{subfigure}
        \caption{Graphs of solution \eqref{soln:4.2.1}}
    \end{figure}

    \item When $b - 2 d \ne 0\,$:
        \begin{equation}\label{soln:4.2.2}
            \begin{cases}
                j_{{0}}= {\dfrac {a^2 - \sigma^2 \left( b-2\,d \right)  \left(b - 2\, d 
                \left( 1 + 2\,a\,\lambda^2 \left(2\, {m}^{2}-1 \right)\right)  \right)
                }{ \sigma^2 \left( b-2\,d \right) ^{2}}}
                \,, \\[9 pt]

                j_2 = -12\,{\dfrac {a\,d\,{\lambda}^{2}{m}^{2}}{b-2\,d}}\,, \\[9 pt]

                k_{{0}}= {\dfrac {a + \sigma^2 \left( b-2\,d \right)  \left( 1 - 4
                \, d\,{\lambda}^{2}
                \left( 2\,{m}^{2}-1 \right) \right) }{\sigma \left( 
                b-2\,d \right) }}
                \,, \\[9 pt] 

                k_2 = 12\,d\,{\lambda}^{2}{m}^{2}\sigma\,, \\[9 pt]

                \lambda > 0 ,\, \sigma \ne 0 ,\, m \in (\,0,1\,] \,.
            \end{cases}
        \end{equation}
        Two graphs of this solution can be found below in Figures 
        \ref{4.2.2a} and \ref{4.2.2b}, in which $\eta$ is graphed in blue and $w$ 
        in green. The explicit parameters chosen for each graph are given in the captions. 
        \begin{figure}[h]
            \centering
            \begin{subfigure}[b]{0.475\textwidth}
                \includegraphics[width=\textwidth]{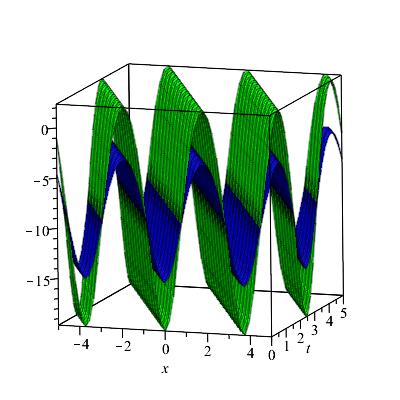}
                \caption{$\{ \lambda = 1,\, m=\frac{1}{\sqrt{2}},\, 
                \sigma = -1, a=-\frac{11}{3},\, b=2,$ \\ $d=2\}$}\label{4.2.2a}
            \end{subfigure}
            \begin{subfigure}[b]{0.475\textwidth}
                \includegraphics[width=\textwidth]{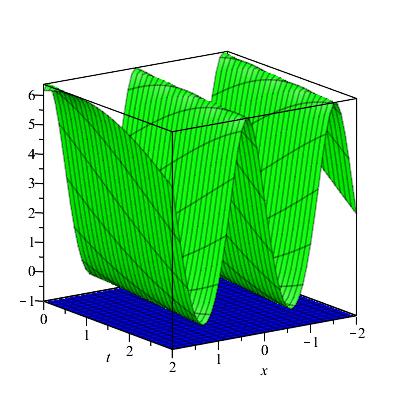}
                \caption{$\{ \lambda = 2,\, m=\frac{3}{4},\, 
                \sigma = \frac{1}{8},\, a=0, \, b=-\frac{5}{3},$ 
                \\ $d=2\}$}\label{4.2.2b}
            \end{subfigure}
            \caption{Graphs of solution \eqref{soln:4.2.2}}
        \end{figure}
\end{enumerate}

\subsection{Semi-trivial Solutions}
Substituting $a = b = 0$ into \eqref{coeffs 1} and solving, a solution 
of the form \eqref{eqn:eta-w-ST} is found where
\begin{equation}\label{soln:4.3}\begin{cases}
    j_0 = -1\,, \\[5 pt]

    k_{{0}}=-8\,d\,{\lambda}^{2}{m}^{2}\sigma+4\,d\,{\lambda}^{2}\sigma
        +\sigma \,, \\[5 pt]

    k_1 = 0 \,, \\[5 pt]

    k_2 = 12\,d\,{\lambda}^{2}{m}^{2}\sigma\,, \\[5 pt]

    \lambda > 0 ,\, \sigma \ne 0 ,\, m \in (\,0,1\,] \,.
\end{cases}\end{equation}
One may notice that Figures \ref{4.1.2b} and \ref{4.2.2b} are graphs in which 
$\eta = -1$. In fact, these are exactly graphs of solution \eqref{soln:4.3}, 
so additional graphs need not be provided here.
The relationship between solution \eqref{soln:4.3} and 
solutions \eqref{soln:4.1.2} and \eqref{soln:4.2.2} is made clear in 
the next section.

\section{Conclusion}
As stated at the end of Section 4.3, there is a distinct relationship between 
solutions \eqref{soln:4.1.2}, \eqref{soln:4.2.2}, and \eqref{soln:4.3}.
In fact, if $\sigma (b-2d) > 0$, then solution \eqref{soln:4.2.2} can be retrieved 
by taking the limit as $c$ approaches zero of solution \eqref{soln:4.1.2} while choosing the 
bottom signs for $\pm$ and $\mp$.  Similarly, solution \eqref{soln:4.3} can be recovered 
from solutions \eqref{soln:4.1.2} or \eqref{soln:4.2.2} in any of the following ways
\begin{itemize}
    \item if $d \sigma>0$, then evaluating solution \eqref{soln:4.1.2} at $a=b=0$ and 
        choosing the top signs for $\pm$ and $\mp$;
    \item if $\sigma (b-2d) > 0$, then evaluating solution \eqref{soln:4.1.2} at 
        $a=0$ and choosing the bottom signs for $\pm$ and $\mp$;
    \item if $b - 2d \ne 0$, then evaluating solution \eqref{soln:4.2.2} at 
        $a=0$.
\end{itemize}

The semi-trivial solution \eqref{soln:4.3} is also of importance.
When we evaluate system \eqref{abcd-system} at $\eta = -1$ and $a = 0$, it 
collapses to a single equation. This equation is 
\begin{equation*}
    w_t + w w_x - d w_{xxt} = 0\,,
\end{equation*}
which is exactly the BBM equation after a transformation. This equation has been shown 
to have cnoidal wave solutions \cite{AZ}, so it is especially important 
that we recovered a cnoidal solution for this case.

Another fundamental property of the Jacobi cnoidal function is that as $m$ approaches
one, the cnoidal function limits to the hyperbolic secant function. This is important 
because solitary-wave solutions are often given in terms of hyperbolic secant functions.
Thus, in principle, taking the limit of our solutions as $m$ approaches one may
retrieve solitary-wave solutions. Indeed, for \eqref{soln:4.1.2} and \eqref{soln:4.2.2} we get 
solitary-wave solutions of the form 
\begin{equation}\label{soli-wave-form-1}
    \eta = \bar{j}_0 + \bar{j}_2 \sech^2(\bar{\lambda} (x - \bar{\sigma} t))
    \quad \text{and} \quad w = \bar{k}_0 + \bar{k}_2 \sech^2(\bar{\lambda} (x - 
    \bar{\sigma} t),
\end{equation}
where the bar indicates the evaluation at $m=1$. This straightforward 
computation is left to the reader. Some solitary-wave solutions for these 
solutions are graphed in Figures \ref{soli_4.1.2} and \ref{soli_4.2.2}, 
in which $\eta$ is graphed in blue and $w$ in green. The exact values chosen 
for the parameters are given in the captions of each figure. 
In the paper \cite{C1, C2}, solitary-wave solutions were shown to be given exactly 
by the same form in \eqref{soli-wave-form-1} for cases $\{a=0,\, c\ne0 \}$, $\{b=d=0,\, 
a=c\}$ and $\{b=c=0,\, d\ne0\}$ where the coefficients solve certain equations.
\begin{figure}[h]
    \centering
    \begin{subfigure}[b]{0.475\textwidth}
        \includegraphics[width=\textwidth]{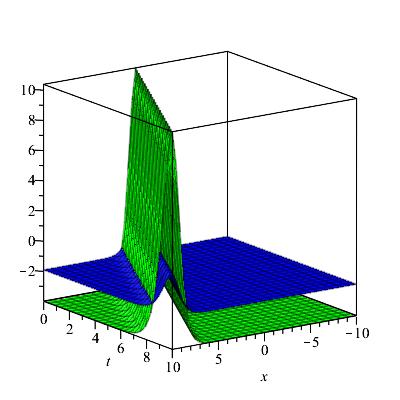}
        \caption{Solution \eqref{soln:4.1.2} when $\{ \lambda = 1,\, m=1,\, 
        \sigma = 1,\\ a=1,\, b=-\frac{8}{3},\, c=1,\, d=1 \}$ and 
        the top signs \\ of $\pm$ and $\mp$ are chosen}
        \label{soli_4.1.2}
    \end{subfigure}
    \begin{subfigure}[b]{0.475\textwidth}
        \includegraphics[width=\textwidth]{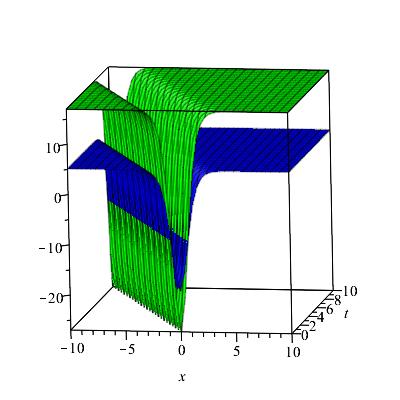}
        \caption{Solution \eqref{soln:4.2.2} when $\{ \lambda = 1,\, m=1,\, 
        \sigma = -1,\\ a=-\frac{11}{3},\, b=2,\, d=2\}$}\label{soli_4.2.2}
    \end{subfigure}
    \caption{Graphs of solitary wave solutions of form \eqref{soli-wave-form-1}}
\end{figure}

From solution \eqref{soln:4.2.1}, we get a solitary-wave solution of the form
\begin{equation}\label{soli-wave-form-2}
    \eta = \bar{j}_0 + \bar{j}_2 \sech^2(\lambda (x - \sigma t)) + \bar{j}_4 \sech^4(\lambda (x - \sigma t))
    \quad \text{and} \quad w = \bar{k}_0 + \bar{k}_2 \sech^2(\lambda (x - \sigma t)),
\end{equation}
where again, the bar indicates the evaluation at $m=1$. This is graphed in Figure \ref{soli_4.2.1},
in which $\eta$ is graphed in blue and $w$ in green.
The form found in this paper \eqref{soli-wave-form-2} again matches the form of solitary wave 
solutions found in \cite{C1, C2} for cases $\{a=c=0\}$ and $\{c=d=0,\, b\ne0\}$. This also
re-emphasizes exactly what was found in this paper, that solutions with a $\cn^4$
term in $\eta$ can only occur when $c=0$.

Similarly, looking at solution \eqref{soln:4.3} as $m$ approaches one, we get a 
solitary-wave solution of the form
\begin{equation*}
    \eta = -1
    \quad \text{and} \quad w = \bar{k}_0 + \bar{k}_2 \sech^2(\lambda (x - \sigma t)),
\end{equation*}
which was also found in \cite{C1, C2} for the case $\{a=0\}$.

Of course, there is no one-to-one correspondence between cnoidal solutions 
and solitary-wave solutions. Thus, one should not expect to establish \textit{all} 
solitary-wave solutions by taking the limit as the elliptic modulus $m$ approaches one.
One such instance of this is found in \eqref{soln:4.1.1}, where a solution of the 
form 
\begin{equation}\label{soli-wave-form-3}
    \eta = \bar{j}_0 + \bar{j}_1 \sech (\bar{\lambda} (x - \bar{\sigma} t)) 
        + \bar{j}_2 \sech^2(\bar{\lambda} (x - \bar{\sigma} t))
    \quad \text{and} \quad w = \bar{k}_0 + \bar{k}_1 \sech (\bar{\lambda} 
        (x - \bar{\sigma} t)) + \bar{k}_2 \sech^2(\bar{\lambda} (x - \bar{\sigma} t))\,,
\end{equation} 
is achieved as $m$ approaches one. This is graphed in Figure \ref{soli_4.1.1},
in which $\eta$ is graphed in blue and $w$ in green. 
This form of solution was 
not found in \cite{C1, C2}, showing the lack of correspondence.
In turn, a form of solitary-wave solutions was found in \cite{C1, C2} that was not 
recovered in this paper. This solution is found when $\{b=c=d=0,\, a>0\}$, and takes the form 
\begin{equation*}
    \eta = j_0 + j_2 \sech^2(\lambda (x - \sigma t)) \quad \text{and} \quad
    w = k_0 + k_1 \tanh(\lambda (x - \sigma t)).
\end{equation*}
This solution was not established in this paper because hyperbolic tangent is not
a limiting case of the Jacobi cnoidal function. 
\begin{figure}[h]
    \centering
    \begin{subfigure}[b]{0.475\textwidth}
        \includegraphics[width=\textwidth]{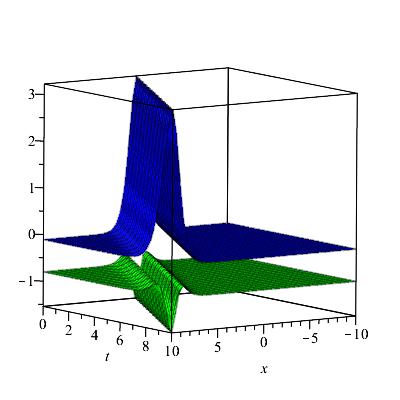}
        \caption{Solution \eqref{soln:4.2.1} when $\{ \lambda = 1,\, m=1,\, 
        \sigma = 1,\\ a=0,\, b=\frac{1}{6},\, d=\frac{1}{6}\}$}\label{soli_4.2.1}
    \end{subfigure}
    \begin{subfigure}[b]{0.475\textwidth}
        \includegraphics[width=\textwidth]{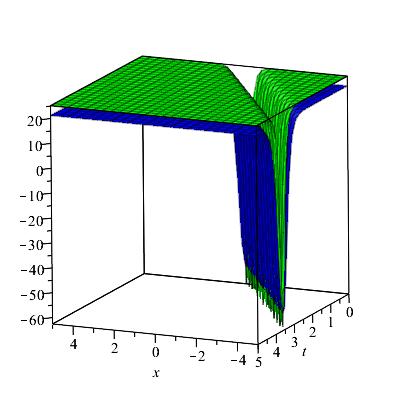}
        \caption{Solution \eqref{soln:4.1.1} when $\{m=1,\, a = -\frac{5}{6},
        \, b = 1,\\ c=-\frac{1}{6},\, d=\frac{1}{3},\, \tau_1 = 1,\, 
        \tau_2 = -1 \}$}\label{soli_4.1.1}
    \end{subfigure}
    \caption{Graphs of solitary wave solutions of form \eqref{soli-wave-form-2}
        and \eqref{soli-wave-form-3}}
\end{figure}

\noindent \textbf{\Large{Declarations}} \\

\noindent \textbf{Conflict of interest}: The authors have no relevant financial or non-financial interests to disclose. \\
\noindent \textbf{Data availability}:  Data sharing is not applicable to this article as no datasets were generated or analysed during this study.

\end{document}